\DeclareFontShape{OT1}{cmtt}{bx}{n}{<5><6><7><8><9><10><10.95><12><14.4><17.28><20.74><24.88>cmttb10}{}
\newtheorem{thm}{Theorem}
\newtheorem{remark}[thm]{Remark}
\newtheorem{lem}[thm]{Lemma}
\newtheorem{cor}[thm]{Corollary}
\def\X{\mathfrak X}
\def\th{\theta}
\def\ka{\kappa}
\def\ph{\varphi}
\def\i{^{-1}}
\def\p{\partial}
\let\on=\operatorname
\def\R{{\mathbb R}}
\def\Tr{\on{Tr}}
\def\vol{{\on{vol}}}
\def\Imm{{\on{Imm}}}
\def\Diff{{\on{Diff}}}
\def\g{\overline{g}}
\def\Hor{{\on{Hor}}}
\def\Nor{{\on{Nor}}}
\def\Ver{{\on{Ver}}}
\def\Tan{{\on{Tan}}}
\def\Adj{{\on{Adj}}}
\begin{document}

\title[Metrics where horizontality equals normality]{Metrics on  spaces of immersions where horizontality equals normality}
\author{Martin Bauer and Philipp Harms}
\address{
Martin Bauer
Fakult\"at f\"ur Mathematik, Universit\"at Wien
\newline\indent
Philipp Harms: Department of Mathematics, ETH Z\"urich}
\email{Bauer.Martin@univie.ac.at}
\email{philipp.harms@math.ethz.ch}
\thanks{
MB was supported by FWF Project P24625}
\date{\today}
\keywords{curve matching; shape space; Sobolev type metric; reparametrization group; Riemannian shape analysis; Riemannian submersion; infinite dimensional geometry}
\subjclass[2010]{Primary 58D17, 58E30, 35A01}

\begin{abstract}
We study metrics on shape space of immersions that have a particularly simple horizontal bundle. More specifically, we consider reparametrization invariant Sobolev metrics $G$ on the space $\Imm(M,N)$ of immersions of a compact manifold $M$ in a Riemannian manifold $(N,\g)$. The tangent space $T_f\Imm(M,N)$ at each immersion $f$ has two natural splittings: one into components that are tangential/normal to the surface $f$ (with respect to $\g$) and another one into vertical/horizontal components (with respect to the projection onto the shape space $B_i(M,N)=\Imm(M,N)/\Diff(M)$ of unparametrized immersions and with respect to the metric $G$). The first splitting can be easily calculated numerically, while the second splitting is important because it mirrors the geometry of shape space and geodesics thereon. Motivated by facilitating the numerical calculation of geodesics on shape space, we characterize all metrics $G$ such that the two splittings coincide. In the special case of planar 
curves, we show that the regularity of curves in the metric completion can be controlled by choosing a strong enough metric within this class. We demonstrate in several examples that our approach allows us to efficiently calculate numerical solutions of the boundary value problem for geodesics on shape space.
\end{abstract}

\maketitle

\section{Introduction}

Nowadays, the study of the geometry of the space of all immersions of a certain type---including plane curves, space curves and surfaces---is an active area of research with applications in computational anatomy, shape comparison and image analysis, see e.g. \cite{Glaunes2008,Jermyn2011,Samir2012,Jermyn2012,Kurtek2010,Mennucci2007,Mumford2006,Scherzer2009,Bauer2014b}. This space, denoted by $\Imm(M,N)$, consists of smooth immersions from a compact $m$-dimensional manifold $M$ into a $n$-dimensional Riemannian manifold $(N,\g)$ of bounded geometry. The important special case of planar curves corresponds to $M=S^1$ and $N=\R^2$. We will always assume that $n\geq m$, in which case $\Imm(M,N)$ is a smooth Fr\'echet manifold \cite{Kriegl1990,Hamilton1982}; otherwise it is empty. 
 
The notion of shape space  in this article is that of unparametrized immersions. This space can be identified with the quotient space
\begin{equation*}
B_i(M,N)=\Imm(M,N)/\Diff(M)\,.
\end{equation*}
Here, $\Diff(M)$ denotes the Lie group of all smooth diffeomorphisms on $M$, which acts smoothly on $\Imm(M,N)$ via composition from the right:
\begin{equation*}
\Imm(M,N)\times \Diff(M)\to \Imm(M,N),\qquad (f,\ph)\mapsto f\circ\ph\,.
\end{equation*}
The quotient space $B_i(M,N)$ is not a manifold, but only an orbifold with isolated singularities (see \cite{Michor1991} for more information). 

Given a reparametrization invariant metric $G$ on $\Imm(M,N)$, we can (under certain conditions) induce a unique Riemannian metric on the quotient space $B_i(M,N)$ such that the projection
\begin{equation}\label{equ:projection}
    \pi: \Imm(M,N) \to B_i(M,N):= \Imm(M,N)/\Diff(M)
\end{equation}
is a Riemannian submersion. A detailed description of this construction is provided in \cite[Section~4]{Bauer2011b}. For many metrics, $T\pi$ induces a splitting of the tangent bundle $T\Imm(M,N)$ into a vertical bundle, which is defined as the kernel of $T\pi$, and a horizontal bundle, defined as the $G$-orthogonal complement of the vertical bundle:
\begin{equation}\label{equ:splitting1}
    T\Imm(M,N) = \on{ker}T\pi \oplus (\on{ker}T\pi)^{\bot,G} 
    =: \Ver \oplus \Hor\, .
\end{equation}
If one can lift any curve in $B_i(M,N)$ to a horizontal curve in $\Imm(M,N)$, then there is a one-to-one correspondence between geodesics on shape space $B_i(M,N)$ and horizontal geodesics on $\Imm(M,N)$ \cite[Section~4.8]{Bauer2011b}. This constitutes an effective way to compute geodesics on shape space provided that the horizontal bundle is not too complicated. A particularly favorable case is when the splitting in equation \eqref{equ:splitting1} coincides with the natural splitting into components that are tangential and normal to the immersed surface with respect to the metric $\g$:
\begin{equation}\label{equ:splitting2}
    T\Imm(M,N) = \Tan \oplus \Nor\, .
\end{equation}
If $\pi_N:TN\to N$ denotes the projection of a tangent vector onto its foot point, the above bundles are given by
\begin{equation*}\begin{split}
    T_f\Imm(M,N)&=\{h\in C^\infty(M,TN):\pi_N\circ h=f\}\,, 
    \\
    \Tan_f &=\{Tf\circ X \mid X \in \X(M)\}\,,
    \\
    \Nor_f &= \{ h \in T_f\Imm(M,N) \mid \forall k \in \Tan_f: \g(h,k)=0 \}\,.
\end{split}\end{equation*}

One specific example of a metric for which the splittings in equations \eqref{equ:splitting1} and \eqref{equ:splitting2} coincide is the $L^2$ metric, which is also the simplest and in a way most natural metric on $\Imm(M,N)$. It is defined as 
\begin{equation*}
G^{L^2}_f(h,k):=\int_M \g(h,k)\vol(g)\,.
\end{equation*}
Here, $h,k \in T_f \Imm(M,N)$ are tangent vectors at $f \in \Imm(M,N)$, $g=f^*\g$ denotes the pullback of the metric $\g$ along the immersion $f$ and  $\vol(g)$ denotes the associated volume form on $M$. 

Unfortunately, the $L^2$-metric is unsuited for many applications because the induced geodesic distance on $B_i(M,N)$ and $\Imm(M,N)$ vanishes. Indeed, any two immersions $f_0, f_1$ can be connected by paths of arbitrarily short $G^{L^2}$-length \cite{Michor2005, Bauer2012c}. The discovery of the degeneracy of the $L^2$-metric started a quest for stronger and more meaningful metrics. One particular class of metrics that has been introduced are almost local metrics \cite{Michor2007,Bauer2012a,Bauer2012b,Shah2008,YezziMennucci2005}, which are defined as
\begin{equation*}
G^{\Phi}_f(h,k):=\int_M \Phi(f)\g(h,k)\vol(g)\,,
\end{equation*}
where $\Phi:\Imm(M,N)\to \mathbb R_{>0}$ is a smooth, reparametrization invariant function. Almost local metrics enjoy the same simple splitting into horizontal and vertical subbundles as the $L^2$-metric, but overcome its degeneracy to some extent. On the positive side, they induce non-vanishing geodesic distance on $B_i(M,N)$. However, on the negative side, 
the geodesic distance vanishes along $\Diff(M)$-fibers. Moreover, well-posedness of the geodesic equation, existence of conjugated points and boundedness of the curvature tensor remain unknown for these metrics.

Another approach to strengthen the $L^2$-metric are Sobolev metrics, which have been studied in a variety of different articles, including \cite{Younes1998,Michor2007,Bauer2011b,Sundaramoorthi2011}. They are defined as
\begin{equation}\label{equ:metric}
G_f^L(h,k)=\int_M \g(L_fh,k)\vol(g)\,,
\end{equation}
where for each $f\in \Imm(M,N)$, the  operator $L_f$ is an elliptic pseudo-differential operator of order $2l$, which is symmetric and positive with respect to the $L^2$-metric. Almost local metrics are included in this definition. The standard Sobolev metric of order $n$ is
\begin{equation*}
G^{H^l}_f(h,k)=\int_M \g\left(\sum_{i=0}^l \Delta^i h,k\right)\vol(g)\,,
\end{equation*}
where the Laplacian depends on the immersion $f$ via the pullback metric $g$. 

Under certain assumptions on $L$, well-posedness of the corresponding geodesic equation has been shown \cite{Michor2007,Bauer2011b,Bauer2012d}. From a computational point of view \cite{Bauer2011a}, metrics involving a pseudo-differential operator have the severe disadvantage that the splitting into horizontal and vertical parts is rather complicated: a tangent vector $h$ is horizontal if and only if $L_fh$ is $\g$-normal to the surface $f$. Therefore, calculating the horizontal component of $h$ involves inverting the operator $L_f$ (or, to be precise, the restriction and projection of the operator $L_f$ to the space $\Tan$, cf. Lemma \ref{lem:splitting}).

If, on the other hand, the splittings \eqref{equ:splitting1} and \eqref{equ:splitting2} coincide, then projecting $h$ onto its horizontal part is easy: one simple takes the normal component of $h$. An immediate application is that one can solve the boundary value problem for geodesics on shape space efficiently by minimizing the horizontal energy of paths in the space of immersions. 
This approach has been used in \cite{Michor2006c,Michor2007,Bauer2012a,Bauer2012b} to calculate geodesics on the spaces $B_i(S^1,\mathbb R^2)$ and $B_i(S^2,\mathbb R^3)$ under various almost local metrics. However, this approach must be expected to work well for any metric where it is easy to calculate horizontal projections, e.g., for the class of metrics presented here. 

Our main result is the classification of all Sobolev metrics such that the splitting into horizontal and vertical components coincides with the natural splitting into tangential and normal components. This classification is established for immersions of general manifolds $M$ and $N$. The special case of planar curves ($M=S^1$ and $N=\mathbb R^2$) is investigated in more detail. There, it is shown that the class of metrics where horizontality equals normality is rich enough to include metrics which dominate any given standard Sobolev metric. Using a result of \cite{Bruveris2014}, it is shown that this makes it possible to control the regularity of curves in the metric completion of shape space. This is important for applications (e.g., stochastics on shape space) where going to the metric completion is indispensable. 

Finally, we present some numerical solutions of the boundary value problem for geodesics on shape space for selected metrics satisfying the assumptions of our main theorem. Our numerical scheme strongly builds on the simplicity of the horizontal bundle asserted by this theorem.

We are thankful to Martins Bruveris for helpful remarks. 

\section{The decomposition theorem}

\subsection{Assumptions}\label{subsec:assumptions}

Following \cite{Bauer2011b}, we now rigorously define the class of metrics that we study in this article. We assume that $L$ is a smooth bundle automorphism of $T\Imm(M,N)$ such that at every $f \in \Imm(M,N)$, the operator 
\begin{equation*}
L_f:T_f\Imm(M,N) \to T_f\Imm(M,N)
\end{equation*}
is a pseudo differential operator of order $2l$, which is symmetric and positive with respect to the $L^2$-metric on $\Imm(M,N)$. Moreover, we assume that $L$ is invariant under the action of the reparametrization group $\Diff(M)$ acting on $\Imm(M,N)$, i.e. 
\begin{equation*}
L_{f\circ \ph}(h\circ \ph)=L_f(h)\circ \ph \qquad \text{for all } \ph \in \Diff(M)\,.
\end{equation*}
These assumptions will remain in place throughout this work. Their immediate use is as follows: being symmetric and positive, $L$ induces a Sobolev type metric on the manifold of immersions through equation \eqref{equ:metric}. The $\Diff(M)$-invariance of $L$ implies the $\Diff(M)$-invariance of the metric $G^L$. Assuming that the decomposition in
horizontal and vertical bundles exists, there is a unique metric on $B_i(M,N)$, such that the projection \eqref{equ:projection} is a Riemannian submersion, see \cite[Thm. 4.7]{Bauer2011b}. Then  the resulting geometry of shape space is mirrored by the ``horizontal geometry'' on the manifold of immersions. 

\subsection{Splitting into horizontal and vertical subbundles}

It follows from the definitions that the horizontal and vertical bundles are given by
\begin{equation*}\begin{split}
\Ver_f&:=\ker(T\pi)=\Tan_f\,,
\\ 
\Hor_f&:=(\Ver_f)^{\bot,G}=\big\{h\in T_f\Imm(M,N): L_fh \in \Nor_f\}\,.
\end{split}\end{equation*}
By the positivity of the metric $G^L$, the sum $\Ver_f\oplus \Hor_f$ is always direct, but it might not span the entire tangent space $T_f\Imm(M,N)$. In other words, in our infinite dimensional setting, a complement to the vertical bundle might only exist in some completion of the space. In this section, we present sufficient conditions for the existence of a splitting of the tangent space into horizontal and vertical subbundles. 

In contrast to above, the splitting into tangential and normal parts given in equation \eqref{equ:splitting2} always exists. Let $h^\Tan,h^\Nor$ be the components of a tangent vector $h$ corresponding to this splitting and let
\begin{equation*}\begin{split}
L_f^\Tan: \Tan_f \to \Tan_f\,, \quad h \mapsto (L_f h)^\Tan\,,\\
L_f^\Nor: \Nor_f \to \Nor_f\,, \quad h \mapsto (L_f h)^\Nor
\end{split}\end{equation*}
be projections and restrictions of $L$ to the corresponding subbundles. These operators allow us to formulate a sufficient condition for the existence of a horizontal/vertical splitting.

\begin{lem}\label{lem:splitting}
If the operator $L_f^\Tan$ is invertible, then a splitting \eqref{equ:splitting1} into horizontal and vertical parts exists and the corresponding projections are 
\begin{align}
h^{\Ver} &= (L_f^\Tan)\i\big((L_fh)^\Tan\big)\,, &
h^{\Hor} &= h-h^{\Ver} \,.
\end{align} 
\end{lem}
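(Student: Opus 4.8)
The plan is to verify directly that the candidate projections $h^{\Ver}$ and $h^{\Hor}$ given in the statement land in $\Ver_f = \Tan_f$ and $\Hor_f = \{h : L_f h \in \Nor_f\}$ respectively, and that their sum recovers $h$. The latter is immediate from the definition $h^{\Hor} = h - h^{\Ver}$, so the content is entirely in checking membership in the two subbundles. Since $L_f^\Tan$ is assumed invertible, the expression $(L_f^\Tan)^{-1}\big((L_f h)^\Tan\big)$ is well defined: $(L_f h)^\Tan$ lies in $\Tan_f$, which is the domain and codomain of $L_f^\Tan$, so $h^{\Ver} \in \Tan_f = \Ver_f$ is automatic.

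The main step is to show $h^{\Hor} = h - h^{\Ver} \in \Hor_f$, i.e. that $L_f(h^{\Hor})$ is $\g$-normal to $f$, which amounts to showing $(L_f(h - h^{\Ver}))^\Tan = 0$. First I would compute, using linearity of $L_f$,
\begin{equation*}
\big(L_f(h - h^{\Ver})\big)^\Tan = (L_f h)^\Tan - (L_f h^{\Ver})^\Tan = (L_f h)^\Tan - L_f^\Tan(h^{\Ver})\,,
\end{equation*}
where the last equality uses that $h^{\Ver}\in\Tan_f$ together with the very definition of $L_f^\Tan$ as $h\mapsto (L_f h)^\Tan$ restricted to $\Tan_f$. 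Substituting $h^{\Ver} = (L_f^\Tan)^{-1}\big((L_f h)^\Tan\big)$ and cancelling $L_f^\Tan \circ (L_f^\Tan)^{-1}$ gives $(L_f h)^\Tan - (L_f h)^\Tan = 0$, as desired. Hence $L_f(h^{\Hor})$ has vanishing tangential part, so $L_f(h^{\Hor}) \in \Nor_f$ and therefore $h^{\Hor}\in\Hor_f$.

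It remains to confirm that these are genuinely the projections of a direct-sum decomposition, i.e. that $\Ver_f \cap \Hor_f = 0$ and that every $h$ decomposes. Directness is already noted in the text as a consequence of positivity of $G^L$: if $h \in \Ver_f \cap \Hor_f$ then $G^L_f(h,h) = \int_M \g(L_f h, h)\vol(g) = 0$ since $L_f h \in \Nor_f$ while $h \in \Tan_f$, forcing $h = 0$. Combined with the decomposition $h = h^{\Ver} + h^{\Hor}$ just exhibited, this shows $T_f\Imm(M,N) = \Ver_f \oplus \Hor_f$ and identifies the stated maps as the associated projections. Finally, one should remark that the construction is smooth in $f$ and $\Diff(M)$-equivariant, which follows from the corresponding properties of $L$ and of the tangential/normal splitting, so the splitting holds at the level of bundles and not merely fibrewise.

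I do not expect a serious obstacle here; the only mild subtlety is bookkeeping with the projections, namely being careful that $(L_f h^{\Ver})^\Tan$ really equals $L_f^\Tan(h^{\Ver})$ — this is true precisely because $h^{\Ver}$ is already tangential, so no normal-to-tangential cross term is dropped. If one wanted the sharper statement that a splitting exists \emph{only if} $L_f^\Tan$ is invertible, more work would be needed, but the lemma as stated asserts only sufficiency.
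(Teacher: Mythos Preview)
Your proof is correct and follows exactly the approach the paper indicates: verify that $h^{\Ver}\in\Ver_f$, that $h^{\Hor}\in\Hor_f$ (via $(L_f(h-h^{\Ver}))^{\Tan}=0$), and that they sum to $h$. The paper merely states this is straightforward and refers to \cite{Bauer2011b}, so your write-up is in fact more detailed than the original.
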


\begin{proof}
It is straight-forward to check that $h^\Ver \in \Ver_f, h^\Nor \in \Nor_f$ and that these vectors add up to $h$, c.f. \cite{Bauer2011b}.
\end{proof}

The condition of Lemma~\ref{lem:splitting} is satisfied in many cases, as the following Lemma shows.

\begin{lem}[Sect. 6.8 in \cite{Bauer2011b}]\label{lem:elliptic}
If the operator $L_f$ is elliptic, then all of the operators $L_f, L_f^\Tan, L_f^\Nor$ are elliptic and invertible. 
\end{lem}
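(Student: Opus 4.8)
The plan is to exploit the fact that ellipticity is preserved under the operations of taking tangential and normal parts, together with the fact that, over a compact base manifold $M$, an elliptic pseudo-differential operator that is injective (which follows here from positivity with respect to the $L^2$-metric) is automatically an isomorphism onto its appropriate Sobolev completion, and hence invertible on the smooth sections by elliptic regularity. So the statement really has two halves: an algebraic/symbolic half, showing that $L_f^\Tan$ and $L_f^\Nor$ are again elliptic pseudo-differential operators of order $2l$; and an analytic half, deducing invertibility of all three operators from ellipticity plus symmetry and positivity.

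For the first half, I would work with principal symbols. The bundle splitting $T_f\Imm(M,N) = \Tan_f \oplus \Nor_f$ is a smooth decomposition of the pullback bundle $f^*TN$ over $M$, so at each point $x \in M$ and each nonzero covector $\xi \in T^*_xM$ the principal symbol $\sigma_{2l}(L_f)(x,\xi)$ is an endomorphism of $T_{f(x)}N$ which, being the symbol of an elliptic operator, is invertible. The tangential and normal projections $P^\Tan, P^\Nor$ are zeroth-order operators whose symbols are just the pointwise projections, so $\sigma_{2l}(L_f^\Tan)(x,\xi) = P^\Tan \, \sigma_{2l}(L_f)(x,\xi)\, P^\Tan$ as an endomorphism of $\Tan_f$, and similarly for $\Nor$. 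The key point is that symmetry and positivity of $L_f$ pass to the symbol level: $\sigma_{2l}(L_f)(x,\xi)$ is a symmetric, positive-definite endomorphism of $T_{f(x)}N$ (with respect to $\g$), hence its compressions to the subspaces $\Tan_f$ and $\Nor_f$ are again symmetric and positive-definite, therefore invertible. This shows $L_f^\Tan$ and $L_f^\Nor$ are elliptic of order $2l$; and $L_f$ itself is elliptic by hypothesis.

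For the second half, I would invoke standard elliptic theory on the closed manifold $M$. An elliptic pseudo-differential operator $A$ of order $2l$ on sections of a vector bundle over a compact manifold extends to a Fredholm operator $H^{s+2l} \to H^s$ for every $s$, with index and kernel independent of $s$, the kernel consisting of smooth sections. If in addition $A$ is symmetric and positive with respect to an $L^2$-inner product, then $\ker A = 0$ (if $Au = 0$ then $\langle Au,u\rangle = 0$ forces $u = 0$ by positivity), and by self-adjointness the cokernel also vanishes, so $A$ is an isomorphism on each Sobolev level and, by elliptic regularity, an isomorphism of the smooth sections. Applying this to each of $L_f$, $L_f^\Tan$, $L_f^\Nor$ — all of which inherit symmetry and positivity from $L_f$ by the symbol argument above (or directly: $G^L_f(h,k) = \int_M \g(L_f h, k)$ restricted to $\Tan_f$ equals $\int_M \g(L_f^\Tan h, k)$ since the normal part pairs trivially with tangential $k$, and likewise for $\Nor_f$) — gives invertibility of all three.

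The main obstacle, and the only genuinely delicate point, is making precise that ``compression of an elliptic operator to a subbundle is again elliptic.'' One has to be careful that $L_f^\Tan$ is genuinely a pseudo-differential operator of order $2l$ and not merely an operator with an invertible principal symbol — this is fine because composing a $\Psi$DO of order $2l$ with the zeroth-order projection operators $P^\Tan, P^\Nor$ (which are smooth bundle maps, hence $\Psi$DOs of order $0$) again yields a $\Psi$DO of order $2l$, and the principal symbol is multiplicative. The positivity of the compressed symbol is then the elementary linear-algebra fact that if a symmetric positive-definite endomorphism is compressed to a subspace via orthogonal projection, the result is again symmetric positive-definite. Once this is in hand, the rest is a routine citation of elliptic theory, exactly as indicated by the reference to \cite[Sect.~6.8]{Bauer2011b}.
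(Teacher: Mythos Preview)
Your proposal is correct and follows essentially the same approach as the paper: both arguments compute the principal symbol of the compressed operator as the compression of the (symmetric, positive-definite) principal symbol of $L_f$ to deduce ellipticity, observe that symmetry and positivity with respect to $G^{L^2}$ are inherited by $L_f^\Tan$ and $L_f^\Nor$, and then invoke standard Fredholm/index theory for elliptic operators on a compact manifold (injectivity from positivity, index zero from symmetry) to conclude invertibility on each Sobolev level and hence on smooth sections. Your discussion of why the projections are order-zero $\Psi$DOs and why compositions remain $\Psi$DOs of order $2l$ is a welcome bit of extra care that the paper leaves implicit.
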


\begin{proof}
We show the statement for the operator $L_f^\Tan$. Let $\sigma^{L_f}$ and $\sigma^{L^\Tan_f}$ be the principal symbols of $L_f$ and $L^\Tan_f$. Then, for any $x \in M$ and $\xi \in T^*M$, one has
\begin{align*}
\sigma^{L_f}(\xi):T_{f(x)}N&\to T_{f(x)}N\,, & 
\sigma^{L^\Tan_f}(\xi):Tf.T_xM&\to Tf.T_xM\,.
\end{align*}
For any $\xi \neq 0,$ the mapping $\sigma^{L_f}(\xi)$ is symmetric and positive definite with respect to $\g$ because $L_f$ is elliptic. The relation 
\begin{equation*}
\forall h\in Tf.T_xM : \sigma^{L^\Tan_f}(\xi)h=\big(\sigma^{L_f}(\xi)h\big)^\Tan\,.
\end{equation*}
shows that $\sigma^{L_f^\Tan}(\xi)$ is also symmetric and positive definite. Therefore, $L_f^\Tan$ is elliptic. Moreover, it inherits symmetry and positivity with respect to the scalar product $G_f^{L^2}$ from $L_f$:
\begin{equation*}\begin{split}
\forall h,k \in \Tan_f: 
\int_M \g(L_f^\Tan h, k) \vol(g) = \int \g(h,L_f^\Tan k) \vol(g)\,, 
\\
\forall h \in \Tan_f: h \neq 0 \Rightarrow \int_M \g(L_f^\Tan h, h) \vol(g) >0\,.
\end{split}\end{equation*}
For any $j \geq 0$, let $H^j$ be the $j$-th order Sobolev completion of $\Tan_f$. Since $L_f^\Tan$ is  elliptic and symmetric, it is self-adjoint on $H^j$ and its index as an operator $H^{j+2p}\to H^j$ vanishes. It is injective (since positive) with vanishing index (since self-adjoint elliptic, by \cite[theorem 26.2]{Shubin1987}), hence it is bijective and thus invertible by the open mapping theorem. The inverse $(L_f^\Tan)\i$ restricts to a continuous linear mapping on the Fr\'echet space $\Tan_f$. Ellipticity, symmetry, positivity and invertibility of $L_f^\Nor$ and $L_f$ are shown in a similar way. 
\end{proof}

\subsection{On when horizontality equals normality}

The vertical and tangential bundles are always the same by definition (c.f. Lemma \ref{lem:splitting}). Therefore, horizontality equals normality if and only if the horizontal/vertical splitting coincides with the normal/tangential splitting, which is the property we are interested in. 

\begin{thm}[Decomposition theorem]\label{thm:decomposition}
The following two statements are equivalent:
\begin{itemize}
\item[(a)] The splitting \eqref{equ:splitting1} exists and coincides with  \eqref{equ:splitting2}.
\item[(b)] The operator $L$ has a decomposition $L=L^\Nor\oplus L^\Tan$ into operators $L^\Nor:\Nor\to\Nor$ and $L^\Tan:\Tan\to\Tan$.
\end{itemize}
\end{thm}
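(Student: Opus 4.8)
The plan is to prove both implications directly from the characterization $\Hor_f=\{h\in T_f\Imm(M,N):L_fh\in\Nor_f\}$, the always-valid identities $\Ver_f=\Tan_f$ and $T_f\Imm(M,N)=\Tan_f\oplus\Nor_f$, and the symmetry and positivity of each $L_f$ with respect to $G^{L^2}$. Note first that positivity forces $L_f$ to be injective, and that $\Tan_f$ and $\Nor_f$ are orthogonal \emph{pointwise}, hence also in $G^{L^2}$.

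For $(b)\Rightarrow(a)$: assume $L_f(\Tan_f)\subseteq\Tan_f$ and $L_f(\Nor_f)\subseteq\Nor_f$. For $h\in T_f\Imm(M,N)$ write $h=h^\Tan+h^\Nor$; then $L_fh=L_fh^\Tan+L_fh^\Nor$ is, by uniqueness, exactly the tangential/normal decomposition of $L_fh$, so $(L_fh)^\Tan=L_fh^\Tan$. Hence $h\in\Hor_f\iff L_fh^\Tan=0\iff h^\Tan=0\iff h\in\Nor_f$, the middle equivalence using injectivity of $L_f$. Thus $\Hor_f=\Nor_f$, and since $\Ver_f=\Tan_f$ and $\Tan_f\oplus\Nor_f$ already exhausts $T_f\Imm(M,N)$, the splitting \eqref{equ:splitting1} exists and coincides with \eqref{equ:splitting2}. (Equivalently, one checks that $L_f^\Tan=L_f|_{\Tan_f}$ is invertible --- its inverse being the restriction of the bundle automorphism $L_f\i$, which preserves $\Tan_f$ by the same uniqueness argument --- and then reads the projections $h^\Ver=h^\Tan$, $h^\Hor=h^\Nor$ off Lemma~\ref{lem:splitting}.)

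For $(a)\Rightarrow(b)$: assume the splitting exists with $\Hor_f=\Nor_f$. From $\Nor_f\subseteq\Hor_f$ and the definition of $\Hor_f$ we read off $L_f(\Nor_f)\subseteq\Nor_f$ at once. For the tangential inclusion, fix $v\in\Tan_f$. For every $w\in\Nor_f$, symmetry gives $G^{L^2}_f(L_fv,w)=G^{L^2}_f(v,L_fw)$, which vanishes because $L_fw\in\Nor_f$ is pointwise $\g$-orthogonal to $v\in\Tan_f$. Taking $w=(L_fv)^\Nor\in\Nor_f$ yields $G^{L^2}_f\big((L_fv)^\Nor,(L_fv)^\Nor\big)=0$, so $(L_fv)^\Nor=0$ by positivity, i.e. $L_fv\in\Tan_f$. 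Hence $L=L^\Tan\oplus L^\Nor$ with $L^\Tan=L|_{\Tan}$, $L^\Nor=L|_{\Nor}$.

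The arguments are short; the only delicate point is the step in $(a)\Rightarrow(b)$ from ``$L_fv\perp_{G^{L^2}}\Nor_f$'' to ``$L_fv\in\Tan_f$'', which in this Fr\'echet setting does not follow from abstract Hilbert-space orthogonality but is rescued by the fact that the tangential/normal splitting is defined pointwise on all of $T_f\Imm(M,N)$, so the single test vector $w=(L_fv)^\Nor$ suffices. Conversely, in $(b)\Rightarrow(a)$ one should make sure to produce the genuine \emph{existence} of the splitting rather than a mere equality of formally defined bundles; this is immediate since $\Tan_f\oplus\Nor_f$ is all of $T_f\Imm(M,N)$.
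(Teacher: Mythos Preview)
Your proof is correct and follows essentially the same approach as the paper: for $(a)\Rightarrow(b)$ you both use symmetry of $L$ to pass from $L(\Nor)\subseteq\Nor$ to $L(\Tan)\subseteq\Tan$, and for $(b)\Rightarrow(a)$ you both use positivity (you via injectivity of $L_f$, the paper via non-degeneracy of $G^L$) to conclude $h^\Tan=0$ from $L_fh^\Tan=0$. Your remark about why $G^{L^2}$-orthogonality to $\Nor_f$ forces membership in $\Tan_f$ via the specific test vector $w=(L_fv)^\Nor$ is a nice clarification that the paper leaves implicit.
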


\begin{proof}
Assume (a), i.e., that the horizontal and normal bundles coincide. Then, by definition, $L$ maps normal into normal vectors. To see that it also maps tangential into tangential vectors, take a tangential vector $h$ and test $Lh$ against arbitrary normal vectors $k$:
\begin{equation*}
\int_M \g(Lh,k)\vol(g) = \int_M \g( h,Lk) \vol(g) =0 
\end{equation*}
for all normal vectors $k$, because $L$ is symmetric and $Lk$ is normal. This shows that $Lh$ is tangential. To summarize, we have shown that the normal and tangential bundle are invariant subspaces of $L$. Therefore, (b) holds.

Conversely, assume (b). Then, by definition, every normal vector is horizontal. To see that every horizontal vector is normal, take a horizontal vector $h$ and split it into its normal and tangential components $h=h^\Nor + h^\Tan$.  By the horizontality of $h$, $Lh$ is normal, which means that $L^\Tan h^\Tan=0$. Then also
\begin{equation*}
G^L(h^\Tan,h^\Tan)=\int_M\g( L^\Tan h^\Tan,h^\Tan)\vol(g)=0\,,
\end{equation*}
which implies $h^\Tan=0$ by the non-degeneracy of the metric. Therefore, $h$ is normal.
\end{proof}

\subsection{The geodesic equation}\label{sec:geodequation}

If $L=L^\Nor\oplus L^\Tan$ decomposes as in Theorem~\ref{thm:decomposition}, then the metric induced by $G^L$ on shape space depends on $L^\Nor$, but not on $L^\Tan$. Consequently, the geodesic equation on shape space and the horizontal geodesic equation on $\Imm(M,N)$ should also have this property. This can be verified directly. To this aim, we recall the formula for the horizontal geodesic equation of a $G^L$ metric, see \cite[Theorem~6.10]{Bauer2011b}: 
\begin{equation}\label{equ:horizontal_geodesic}
\left\{\begin{aligned}
& f_t \in \Hor\,, \\
& (\nabla_{\p_t}f_t)^\Hor = 
L\i\bigg(\frac12 \Adj(\nabla L)(f_t,f_t)^\Nor-\frac12 \g(Lf_t,f_t)\Tr^g(S)\\&\qquad\qquad\qquad\qquad\qquad-((\nabla_{f_t}L)f_t)^\Nor+\Tr^g(\g(\nabla f_t,Tf))Lf_t\bigg)\,.
\end{aligned}\right.
\end{equation}
For this equation to make sense, we need to assume that $L_f$ is invertible and that $\nabla L$ has an adjoint $\Adj(\nabla L)$ defined by
\begin{equation*}
\int \g(\Adj(\nabla L)(h,k),m)\vol=\int\g((\nabla_m L)h,k)\vol\,.
\end{equation*}
The existence of the adjoint is not guaranteed, but has to be proven for each specific operator $L$, which is usually done by a series of  integration by parts. For operators of the form $L=1+\Delta^p$, this was done in \cite[Section 8]{Bauer2011b}.

\begin{lem}\label{lem:horizontal_geodesic}
Let $L=L^\Nor\oplus L^\Tan$ as in Theorem \ref{thm:decomposition}. Then the horizontal geodesic equation \eqref{equ:horizontal_geodesic} does not depend on $L^\Tan$.
\end{lem}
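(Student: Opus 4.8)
The plan is to check, term by term, that under the hypothesis $L=L^\Nor\oplus L^\Tan$ every ingredient of \eqref{equ:horizontal_geodesic} --- the side condition $f_t\in\Hor$ as well as each of the four summands inside $L\i$ and the outer $L\i$ itself --- depends on $L$ only through its normal block $L^\Nor$; equivalently, that two admissible operators with the same normal block produce literally the same equation.

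First I would invoke Theorem~\ref{thm:decomposition}: the hypothesis is equivalent to $\Hor=\Nor$, so the side condition $f_t\in\Hor$ reads $f_t\in\Nor$ and involves no $L^\Tan$. From here on one may use $f_t\in\Nor$, so that $Lf_t=L^\Nor f_t$; this disposes of the scalar factor $\g(Lf_t,f_t)=\g(L^\Nor f_t,f_t)$ and of the summand $\Tr^g(\g(\nabla f_t,Tf))\,Lf_t$. Next I would observe that the vector to which the outer $L\i$ is applied is normal: the first and third summands carry an explicit $(\cdot)^\Nor$, the second is a multiple of the normal vector $\Tr^g(S)$, and the fourth is a multiple of $L^\Nor f_t$. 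Since $L=L^\Nor\oplus L^\Tan$, its inverse restricted to $\Nor$ is $(L^\Nor)\i$, so the outer inversion contributes only $L^\Nor$.

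It remains to handle the two terms built from $\nabla L$, namely $\big(\Adj(\nabla L)(f_t,f_t)\big)^\Nor$ and $\big((\nabla_{f_t}L)f_t\big)^\Nor$, and this is where the $f$-dependence of the splitting $T\Imm=\Nor\oplus\Tan$ bites. I would write $\on{pr}^\Nor,\on{pr}^\Tan$ for the $\g$-orthogonal projections, so that, using the hypothesis, $L=\on{pr}^\Nor\, L\,\on{pr}^\Nor+\on{pr}^\Tan\, L\,\on{pr}^\Tan=:L^\Nor+L^\Tan$ as operators on $T\Imm$. Differentiating $(\on{pr}^\Nor)^2=\on{pr}^\Nor$ and $\on{pr}^\Nor+\on{pr}^\Tan=\on{Id}$ gives the key identity: $\nabla_m\on{pr}^\Nor$ maps $\Nor$ into $\Tan$ and $\Tan$ into $\Nor$, and $\nabla_m\on{pr}^\Tan=-\nabla_m\on{pr}^\Nor$. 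Expanding $\nabla_m L^\Tan=(\nabla_m\on{pr}^\Tan)L\,\on{pr}^\Tan+\on{pr}^\Tan(\nabla_m L)\on{pr}^\Tan+\on{pr}^\Tan L(\nabla_m\on{pr}^\Tan)$ by the Leibniz rule and evaluating on the \emph{normal} vector $f_t$, the first two terms die because their rightmost factor is $\on{pr}^\Tan f_t=0$, and the surviving term $\on{pr}^\Tan L\big((\nabla_m\on{pr}^\Tan)f_t\big)$ is tangential, since $(\nabla_m\on{pr}^\Tan)f_t\in\Tan$ and $L$ preserves $\Tan$. Hence $(\nabla_m L^\Tan)f_t\in\Tan$, so it pairs to zero against the normal vector $f_t$ and has vanishing normal component. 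Therefore $\g\big((\nabla_m L)f_t,f_t\big)=\g\big((\nabla_m L^\Nor)f_t,f_t\big)$ for every $m$, which by non-degeneracy of the $L^2$-pairing in the defining relation $\int\g(\Adj(\nabla L)(f_t,f_t),m)\vol=\int\g((\nabla_m L)f_t,f_t)\vol$ shows that $\Adj(\nabla L)(f_t,f_t)$ depends on $L$ only through $L^\Nor$; taking $m=f_t$ and the normal component handles $\big((\nabla_{f_t}L)f_t\big)^\Nor$ the same way.

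I expect the only real obstacle to be the bookkeeping of the previous paragraph: because $\Nor$ and $\Tan$ move with $f$, $\nabla L$ genuinely couples the two blocks of $L$ --- this coupling is exactly what produces the second fundamental form $S$ in \eqref{equ:horizontal_geodesic} --- and one must carefully track that every cross-term that could carry $L^\Tan$ comes out tangential and hence disappears upon taking a normal component or pairing with $f_t$. Once the slogan ``$\nabla\on{pr}^\Nor$ interchanges $\Nor$ and $\Tan$'' is isolated, the rest is routine. On soft grounds, the lemma is of course expected: $G^L$ restricted to $\Hor=\Nor$ equals $\int\g(L^\Nor\cdot,\cdot)\vol$, so both the metric induced on $B_i(M,N)$ and the horizontal distribution are $L^\Tan$-independent, and \eqref{equ:horizontal_geodesic} is merely the equation satisfied by the horizontal lifts of shape-space geodesics.
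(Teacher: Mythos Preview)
Your argument is correct. The paper's proof follows the same overall outline --- check that each term on the right-hand side of \eqref{equ:horizontal_geodesic} is unaffected by $L^\Tan$, and that the outer $L\i$ may be replaced by $(L^\Nor)\i$ --- but handles the two $\nabla L$ terms by a different and shorter computation. Rather than differentiating the projections, the paper reduces by linearity to the case $L=0\oplus L^\Tan$; then for normal vector fields $h,k,m$ one has $Lh=Lk=0$, and the single line
\[
G^{L^2}\big((\nabla_m L)h,k\big)
= G^{L^2}\big(\nabla_m(Lh)-L\nabla_m h,\,k\big)
= -\,G^{L^2}(\nabla_m h, Lk)=0
\]
uses only the Leibniz rule for $\nabla_m L$ and the $L^2$-symmetry of $L$. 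This simultaneously yields $\Adj(\nabla L)(f_t,f_t)^\Nor=0$ and $((\nabla_{f_t}L)f_t)^\Nor=0$, while the remaining terms vanish because $Lf_t=0$. Your route, via the identity that $\nabla_m\on{pr}^\Nor$ interchanges $\Nor$ and $\Tan$, is perfectly valid and in fact gives a slightly stronger conclusion (the full $\Adj(\nabla L)(f_t,f_t)$, not just its normal part, is $L^\Tan$-independent); the paper's route avoids differentiating the projections altogether and thereby sidesteps exactly the bookkeeping you flagged as the main obstacle.
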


\begin{proof}
Note that in equation \eqref{equ:horizontal_geodesic}, $L\i$ can be replaced by $(L^\Nor)\i$.
Thus, it is sufficient to show that the expression
\begin{multline}\label{equ:rhs_geodesic_equ}
\frac12 \Adj(\nabla L)(f_t,f_t)^\Nor-\frac12 \g(Lf_t,f_t)\Tr^g(S)
\\
-((\nabla_{f_t}L)f_t)^\Nor+\Tr^g(\g(\nabla f_t,Tf))Lf_t
\end{multline}
on the right hand side of the geodesic equation vanishes, if $L$ is of the form $L=0\oplus L^\Tan$, which we assume now. Let $m,h,k$ be vector fields on $\Imm(M,N)$ taking values in the normal bundle. Then 
\begin{multline*}
    G^{L^2}_f\big(\Adj(\nabla L)(h,k),m\big)
    = 
    G^{L^2}_f\big((\nabla_m L)h,k\big)
    \\=
    G^{L^2}_f\big(\nabla_m (\underbrace{Lh}_{=0})-L(\nabla_m h),k\big)
    =
    G^{L^2}_f\big(-\nabla_m h,\underbrace{Lk}_{=0}\big)
    =0\,,
\end{multline*}
which shows that 
\begin{equation*}
    \Adj(\nabla L)(h,k)^\Nor=0\,, \qquad ((\nabla_m L)h)^\Nor=0\,.
\end{equation*}
Since these expressions are tensorial in $m,h,k$ and $f_t$ is normal, one obtains 
\begin{equation*}
    \Adj(\nabla L)(f_t,f_t)^\Nor=0\,, \qquad ((\nabla_{f_t} L)f_t)^\Nor=0\,.
\end{equation*}
Since also $Lf_t=0$, all terms in \eqref{equ:rhs_geodesic_equ} vanish.
\end{proof}

\begin{remark}
The above Lemma shows that the horizontal geodesic equation \eqref{equ:horizontal_geodesic} is well-defined under the following conditions on $L^\Nor$: the operator $L_f^\Nor$ is invertible and $\nabla L^\Nor$ restricted to and projected onto the normal bundle has a normal bundle valued adjoint. Note that it is not necessary to impose any conditions on $L^\Tan$, here.
\end{remark}

\section{The manifold of planar curves}

In the previous section, we characterized all metrics such that the horizontal and normal bundles coincide. In practice, there are many additional properties that a metric should satisfy to be useful. For example, the induced geodesic distance should not vanish, one would like to be able to control the regularity of immersions in the metric completion, and the space should be geodesically complete. 

We show in this section that it is indeed possible to meet at least some of these objectives simultaneously. We do this in the important special case of planar curves, but we believe that similar results can be obtained for arbitrary immersions. However, as of now, metric and geodesic completeness remain unknown in the more general situation, even for standard Sobolev metrics. 

In the case of planar curves, the decomposition of tangent vectors $h\in T_c\Imm(S^1,\R^2)$ into tangential and normal components takes the particularly simple form
\begin{align*}
    h=\langle h,n\rangle n+\langle h,v\rangle v\,, \quad \text{where} \quad 
    v=\partial_\th c/|\partial_\th c|\,,\quad n=Jv\,.
\end{align*}
Here, $\langle\cdot,\cdot\rangle$ and $\lvert\cdot\rvert$ denote the Euclidean scalar product and norm on $\R^2$ and $J:\R^2\to\R^2$ denotes the counter-clockwise rotation by the angle $\pi/2$. This allows us to derive a very simple sufficient condition for the equality of the horizontal and normal bundle.

\begin{lem}\label{lem:sufficient}
Any metric of the form
\begin{equation}\label{equ:metric_ab}
G_c(an+bv,an+bv)
=\int_{S^1} \sum_{i=0}^l 
\left((A_i)_c (D_s^i a)^2 + (B_i)_c (D_s^i b)^2 \right) ds
\end{equation}
with coefficients
\begin{equation*}
A_i, B_i: \Imm(S^1,\R^2) \to C^\infty(S^1,\R)
\end{equation*}
has the property that the horizontal and normal bundles coincide.
\end{lem}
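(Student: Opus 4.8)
The plan is to apply Theorem~\ref{thm:decomposition}: it suffices to verify that a metric of the form \eqref{equ:metric_ab} arises from an operator $L$ that decomposes as $L=L^\Nor\oplus L^\Tan$ with $L^\Nor:\Nor\to\Nor$ and $L^\Tan:\Tan\to\Tan$. So the real content is to read off the operator $L$ associated to the bilinear form \eqref{equ:metric_ab} and observe that it is block-diagonal with respect to the normal/tangential splitting.

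First I would write a general tangent vector as $h=an+bv$ with $a=\langle h,n\rangle$, $b=\langle h,v\rangle$, so that $an\in\Nor_c$ and $bv\in\Tan_c$. The point is that the quadratic form \eqref{equ:metric_ab} contains no cross-terms between $a$ and $b$: it is a sum of a functional depending only on $a$ (and its arc-length derivatives) and one depending only on $b$. Hence, if $k=\alpha n+\beta v$ is a second tangent vector, polarization gives
\begin{equation*}
G_c(h,k)=\int_{S^1}\sum_{i=0}^l\Big((A_i)_c\,(D_s^i a)(D_s^i\alpha)+(B_i)_c\,(D_s^i b)(D_s^i\beta)\Big)\,ds\,.
\end{equation*}
Integrating by parts $i$ times in each summand (there are no boundary terms on $S^1$), and using that $D_s$ is, up to the reparametrization-invariant weight $|\partial_\th c|$, the derivative with respect to $\theta$, one rewrites this as
\begin{equation*}
G_c(h,k)=\int_{S^1}\big\langle (P_c a)\,n+(Q_c b)\,v,\ \alpha n+\beta v\big\rangle\,ds
=G^{L^2}_c\big((P_c a)n+(Q_c b)v,\ k\big)\,,
\end{equation*}
where $P_c=\sum_{i=0}^l(-D_s)^i\big((A_i)_c\,D_s^i\,\cdot\,\big)$ and $Q_c=\sum_{i=0}^l(-D_s)^i\big((B_i)_c\,D_s^i\,\cdot\,\big)$ are scalar elliptic operators of order $2l$ on $C^\infty(S^1,\R)$ (one should note that here $G^{L^2}_c$ is exactly $\int_{S^1}\langle\cdot,\cdot\rangle\,ds$ since $\vol(g)=ds$ for curves). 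Defining $L_c h:=(P_c\langle h,n\rangle)\,n+(Q_c\langle h,v\rangle)\,v$ then exhibits $L$ with the required block structure: $L^\Nor_c(an)=(P_c a)n$ and $L^\Tan_c(bv)=(Q_c b)v$. Symmetry and positivity of $L$ with respect to $G^{L^2}$, and $\Diff(S^1)$-invariance, follow from the corresponding (standard) properties of $P_c$ and $Q_c$ together with the reparametrization invariance of arc length; these are needed only to place the metric in the admissible class of Section~\ref{subsec:assumptions}, and I would state them briefly rather than belabor them. Once $L=L^\Nor\oplus L^\Tan$ is established, Theorem~\ref{thm:decomposition} immediately yields that the horizontal and normal bundles coincide.

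The main obstacle is essentially bookkeeping rather than a genuine difficulty: one must be a little careful that the weights $(A_i)_c,(B_i)_c$ multiply \emph{before} the outer derivatives when integrating by parts, so that $P_c,Q_c$ are genuinely the $L^2$-adjointly-formed operators $\sum(-D_s)^i((A_i)_c D_s^i\,\cdot\,)$ and not something else; and one should remark that $D_s$ depends on $c$, so $P_c,Q_c$ (and hence $L_c$) are $c$-dependent, but this is harmless since all we need is the block-diagonal structure at each fixed $c$. A secondary point worth a sentence is that the splitting \eqref{equ:splitting1} genuinely \emph{exists} here — this follows from Lemma~\ref{lem:splitting} together with the fact that $L^\Tan_c=Q_c$ (conjugated by $h\mapsto\langle h,v\rangle$) is an invertible elliptic operator, e.g.\ when $B_0>0$ — so that statement~(a) of Theorem~\ref{thm:decomposition} is not vacuous.
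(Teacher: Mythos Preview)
Your proposal is correct and follows essentially the same route as the paper: integrate by parts to identify the operator $L_c(an+bv)=\sum_{i=0}^l(-1)^iD_s^i\big((A_i)_c D_s^i a\big)\,n+\sum_{i=0}^l(-1)^iD_s^i\big((B_i)_c D_s^i b\big)\,v$, observe that $\Nor_c$ and $\Tan_c$ are invariant subspaces, and invoke Theorem~\ref{thm:decomposition}. Your additional remarks on polarization, the admissibility conditions of Section~\ref{subsec:assumptions}, and the invertibility of $L^\Tan$ are more than the paper spells out, but they are correct elaborations rather than a different argument.
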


\begin{proof}
Using  integration by parts, the metric in \eqref{equ:metric_ab} can be rewritten in the form \eqref{equ:metric} involving an operator $L$, which is given by
\begin{equation*}
L_c(an+bv)
=\sum_{i=0}^l 
(-1)^i D_s^i\big((A_i)_c (D_s^i a)\big)n
+\sum_{i=0}^l 
(-1)^i D_s^i\big((B_i)_c (D_s^i b)\big)v\,.
\end{equation*}
Clearly, $\Tan_c$ and $\Nor_c$ are invariant subspaces of $L_c$. The result follows from Theorem~\ref{thm:decomposition}.
\end{proof}

Using Lemma~\ref{lem:sufficient}, we can easily construct a metric which is stronger than the Sobolev metric of order one and where horizontality equals normality.

\begin{lem}[Metrics of order one]\label{lem:stronger_than_h1}
The metric 
\begin{equation}\label{equ:metric_order_1}
G_c(an+bv,an+bv)=\int_{S^1} \left((1+2\kappa^2)(a^2+b^2)
+2(D_s a)^2+2(D_s b)^2 \right) ds
\end{equation}
dominates the standard Sobolev $H^1$ metric
\begin{equation*}
    G_c^{H^1}(h,h)= \int_{S^1} \big(|h|^2+|D_s h|^2\big)ds
\end{equation*}
and has the property that the horizontal and normal bundles coincide.
\end{lem}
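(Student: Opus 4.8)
The plan is to verify the two claimed properties separately, both by reducing to results already in hand. For the statement that horizontality equals normality, I would rewrite the metric \eqref{equ:metric_order_1} in the form \eqref{equ:metric_ab} and invoke Lemma~\ref{lem:sufficient} directly. Concretely, writing $h=an+bv$, the integrand is $(1+2\kappa^2)a^2 + 2(D_s a)^2 + (1+2\kappa^2)b^2 + 2(D_s b)^2$, which matches \eqref{equ:metric_ab} with $l=1$ and coefficients $(A_0)_c = (B_0)_c = 1+2\kappa^2$, $(A_1)_c = (B_1)_c = 2$. Since $\kappa = \langle D_s v, n\rangle$ (the signed curvature) is a smooth reparametrization-covariant function of $c$, these coefficients indeed define smooth maps $\Imm(S^1,\R^2)\to C^\infty(S^1,\R)$, so Lemma~\ref{lem:sufficient} applies and the horizontal and normal bundles coincide. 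This part is essentially immediate.

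For the domination statement, the goal is a pointwise (in $c$) inequality $G^{H^1}_c(h,h)\le C\, G_c(h,h)$ for all $h$, where one can even take $C=1$. First I would decompose $h=an+bv$ and compute $|h|^2 = a^2+b^2$ and $|D_s h|^2$ in terms of $a,b$ and their arc-length derivatives. The key computation is $D_s n = -\kappa v$ and $D_s v = \kappa n$ (Frenet equations for a plane curve parametrized by arc length), so that
\begin{equation*}
D_s h = (D_s a)\,n + a(D_s n) + (D_s b)\,v + b(D_s v) = (D_s a - \kappa b)\,n + (D_s b + \kappa a)\,v\,,
\end{equation*}
hence $|D_s h|^2 = (D_s a - \kappa b)^2 + (D_s b + \kappa a)^2$. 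Expanding and using $2|xy|\le x^2+y^2$ termwise gives $|D_s h|^2 \le 2(D_s a)^2 + 2\kappa^2 b^2 + 2(D_s b)^2 + 2\kappa^2 a^2$. Adding $|h|^2 = a^2+b^2$ yields
\begin{equation*}
|h|^2 + |D_s h|^2 \le (1+2\kappa^2)(a^2+b^2) + 2(D_s a)^2 + 2(D_s b)^2\,,
\end{equation*}
which is exactly the integrand of $G_c(h,h)$. Integrating over $S^1$ (with respect to the same arc-length measure $ds$ that appears in both metrics) gives $G^{H^1}_c(h,h)\le G_c(h,h)$, so $G$ dominates $G^{H^1}$.

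The only subtlety worth flagging is bookkeeping of the cross terms: one must be careful that the $\kappa^2$ contributions generated by the Frenet terms are absorbed exactly by the $2\kappa^2(a^2+b^2)$ term in \eqref{equ:metric_order_1}, which is why the coefficient $2$ (rather than $1$) on both $\kappa^2$ and on the derivative terms is the right choice — this is presumably how the metric was designed. There is no real analytic obstacle here; the argument is a short Frenet-frame computation followed by an elementary Cauchy--Schwarz estimate, and the reparametrization invariance needed to descend to shape space is inherited automatically since everything is written in arc-length quantities. I would present the computation of $|D_s h|^2$ in the frame as the main displayed step and then state the inequality.
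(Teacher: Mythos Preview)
Your proposal is correct and follows essentially the same approach as the paper: invoke Lemma~\ref{lem:sufficient} for the horizontal/normal statement, then expand $D_s h$ in the Frenet frame and use $(x\pm y)^2\le 2x^2+2y^2$ to obtain the pointwise domination. One minor slip: from your own Frenet equations $D_s n=-\kappa v$, $D_s v=\kappa n$ one gets $D_s h=(D_s a+\kappa b)\,n+(D_s b-\kappa a)\,v$, not the signs you wrote, but this does not affect the inequality.
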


\begin{proof}
The metric in \eqref{equ:metric_order_1} satisfies the condition of Lemma \ref{lem:sufficient}, which implies that the horizontal and normal bundles coincide. It remains to show that the metric dominates the $H^1$-metric. To this aim, let $h=an+bv$ with $a=\langle h,n\rangle$ and $b=\langle h,v\rangle$. Then
\begin{equation*}
D_sh = (D_s a+b\kappa)n+(D_s b-a\kappa)v\,.
\end{equation*}
Therefore, 
\begin{equation*}
    G_c^{H^1}(h,h)
    =\int_{S_1}\big( a^2+b^2+(D_s a+b\kappa)^2
    +(D_s b-a\kappa)^2\big)ds\,.
\end{equation*}
By the arithmetic-geometric inequality, the estimates
\begin{equation*}
    (D_s a+b\kappa)^2\leq 2\big((D_s a)^2+b^2\kappa^2\big)\,,
    \quad
    (D_s b-a\kappa)^2\leq 2\big((D_s b)^2+a^2\kappa^2\big)
\end{equation*}
hold. Regrouping terms, one obtains that
\begin{equation*}
    G_c^{H^1}(h,h)
    \leq 
    \int_{S^1} \left((1+2\kappa^2)(a^2+b^2)
    +2(D_s a)^2+2(D_s b)^2 \right) ds\,.\qedhere
\end{equation*}
\end{proof}

\begin{cor}
The metric $G$ from Lemma \ref{lem:stronger_than_h1} induces non-vanishing geodesic distance on both $B_i(S^1,\R^2)$ and $\Imm(M,N)$, i.e., the infimum of the $G$-lengths of paths connecting two non-identical planar curves is strictly greater than zero. Furthermore, all curves in the metric completion of the space $(B_i(S^1,{\mathbb R}^2), G)$ are Lipschitz continous.
\end{cor}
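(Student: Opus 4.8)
The plan is to reduce both assertions to the standard Sobolev metric $G^{H^1}$ and then to invoke the result of \cite{Bruveris2014}. By Lemma~\ref{lem:stronger_than_h1} we have the pointwise estimate $G^{H^1}_c(h,h)\le G_c(h,h)$ for every $c\in\Imm(S^1,\R^2)$ and every $h\in T_c\Imm(S^1,\R^2)$. Hence the $G$-length of any path in $\Imm(S^1,\R^2)$ dominates its $G^{H^1}$-length, so $\dist^G\ge\dist^{G^{H^1}}$ on $\Imm(S^1,\R^2)$; since both metrics are $\Diff(S^1)$-invariant, the same inequality passes to the induced distances on $B_i(S^1,\R^2)$ (take infima over $\Diff(S^1)$-orbits). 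It therefore suffices to prove non-vanishing distance and Lipschitz regularity of the completion for $\dist^{G^{H^1}}$.

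For the non-vanishing of the geodesic distance I would first record the elementary a priori estimate for the length $\ell(c)=\int_{S^1}\vol(g)$. Along a path $t\mapsto c(t,\cdot)$ with velocity $h=\p_t c$ one computes $\p_t\ell=\int_{S^1}\langle D_s h,v\rangle\,ds$, so Cauchy--Schwarz gives $|\p_t\sqrt{\ell}|\le\tfrac12\big(\int_{S^1}|D_s h|^2\,ds\big)^{1/2}\le\tfrac12\,G^{H^1}_c(h,h)^{1/2}$. Integrating in $t$ shows that $\sqrt{\ell}$ is $\tfrac12$-Lipschitz with respect to $\dist^{G^{H^1}}$, and since $\ell$ is reparametrization invariant, this descends to $B_i(S^1,\R^2)$. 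An analogous computation applied to the length-weighted center of mass $\bar c=\ell^{-1}\int_{S^1}c\,\vol(g)$, together with the Sobolev embedding $H^1(ds)\hookrightarrow C^0$ along the curve — whose embedding constant is controlled once $\ell$ is bounded away from $0$ and $\infty$ — yields a Lipschitz estimate, in $\dist^{G^{H^1}}$, for a reparametrization-invariant $C^0$-type distance separating distinct elements of $B_i(S^1,\R^2)$. Two non-identical planar curves differ in this $C^0$-type distance and hence have positive $\dist^G$-distance. Rather than reprove all of this, I would quote the corresponding statement from \cite{Bruveris2014}.

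For the regularity of the completion the same circle of ideas applies. Along a $\dist^G$-Cauchy sequence $c_n$ of immersions the lengths $\ell(c_n)$ converge by the first estimate, hence are bounded; the $C^0$-estimate then forces the constant-speed representatives of the $c_n$ to be uniformly bounded in $C^{0,1}$ and to converge uniformly, and a uniform limit of maps sharing a common Lipschitz constant is again Lipschitz. Thus every point of the metric completion of $(B_i(S^1,\R^2),G)$ is represented by a Lipschitz curve, which is the claim. (Degenerate point curves, should they occur as limits, are constant maps and hence trivially Lipschitz, so they do not affect the statement.)

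The main obstacle is the $C^0$ control — equivalently the Lipschitz bound — rather than the length estimate, which is completely elementary. The embedding constant of $H^1(ds)\hookrightarrow C^0$ on the curve degenerates as $\ell\to 0$, so this step only works once one has two-sided bounds on the length along the relevant paths, and along a Cauchy sequence the length could in principle decay to $0$. The second delicate point is reconciling the reparametrization-invariant formulation on $B_i(S^1,\R^2)$ with the choice of constant-speed parametrizations, i.e.\ controlling the reparametrizations along a minimizing or Cauchy sequence. Both issues are exactly what is handled in \cite{Bruveris2014}; our only additional input is the domination $G\ge G^{H^1}$ supplied by Lemma~\ref{lem:stronger_than_h1}.
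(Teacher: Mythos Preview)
Your approach is essentially the paper's: use the domination from Lemma~\ref{lem:stronger_than_h1} and quote known results for the weaker metric. The paper's own proof is a two-line citation.

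The one point worth flagging is your choice of reference. The paper \cite{Bruveris2014} concerns Sobolev metrics of order $l\ge 2$ (it is the source of Theorem~\ref{thm:completeness} in the appendix), not $H^1$; it does not supply the $H^1$ positivity or Lipschitz-completion statements you want to invoke. The paper instead observes that $G$ also dominates the $G^A$-metric --- the $(1+2\kappa^2)|h|^2$ term alone --- and cites \cite{Bauer2011b,Bauer2014b} for non-vanishing geodesic distance and \cite[Thm.~3.11]{Michor2006c} or \cite[Thms.~26 and~27]{Mennucci2008} for the Lipschitz regularity of the completion, all of which are stated and proved for $G^A$ or first-order Sobolev-type metrics. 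Your sketch of the underlying estimates (the $\tfrac12$-Lipschitz bound on $\sqrt\ell$, the $C^0$ control via the Sobolev embedding) is accurate and honest about the delicate points, and these are indeed the ingredients behind the cited theorems; but as a proof of the corollary the right move is to cite those sources rather than \cite{Bruveris2014}.
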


\begin{proof}
These results follow directly from the corresponding results for the $H^1$ or $G^A$ metric. See \cite{Bauer2011b,Bauer2014b} for the positivity result and 
\cite[Thm. 3.11]{Michor2006c} or \cite[Thms. 26 and 27]{Mennucci2008} for the metric completion. 
\end{proof}

\begin{remark}
Unfortunately, the well-posedness result in \cite[Section 6.6]{Bauer2011b} cannot be applied to the metric in Lemma \ref{lem:stronger_than_h1}. To see this, one has to rewrite the metric in the form \eqref{equ:metric} involving an operator $L$. In the present case, 
\begin{equation*}
Lh=(1+2\kappa^2)h-2\left(D_s^2 \langle h,n\rangle\right)n
-2\left(D_s^2 \langle h,v\rangle\right) v.
\end{equation*}
The problem is that the expression on the right hand side contains third derivatives of $c$ while $L$ is only of second order in $h$. Similar problems arise with higher order metrics and metrics on spaces of higher dimensional immersions.
\end{remark}

\begin{thm}\label{strongerthensob}
For any $l \geq 0$, there is a metric which dominates the Sobolev $H^l$ metric 
\begin{equation*}
    G^{H^l}(h,h):= \int_{S^1}\sum_{i=0}^l |D^i_s h|^2 ds
\end{equation*}
and which has the property that the horizontal and normal bundles coincide.
\end{thm}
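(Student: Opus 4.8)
The plan is to construct the dominating metric explicitly by pushing the idea of Lemma~\ref{lem:stronger_than_h1} to higher order, staying within the class of metrics covered by Lemma~\ref{lem:sufficient}. Concretely, I would take
\begin{equation*}
G_c(an+bv,an+bv):=\int_{S^1}\sum_{i=0}^l\Big((A_i)_c\,(D_s^i a)^2+(B_i)_c\,(D_s^i b)^2\Big)\,ds
\end{equation*}
for suitable coefficient functions $(A_i)_c,(B_i)_c\colon\Imm(S^1,\R^2)\to C^\infty(S^1,\R)$ that depend on $c$ only through the curvature $\kappa$ and its arc-length derivatives (so that reparametrization invariance is automatic). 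By Lemma~\ref{lem:sufficient}, any such metric already has the property that the horizontal and normal bundles coincide, so the only real content is to choose the coefficients so that $G_c\ge G^{H^l}_c$.

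The key step is to expand $D_s^i h$ in the moving frame $\{n,v\}$. First I would record the Frenet-type relations $D_s n=-\kappa v$ and $D_s v=\kappa n$, and then show by induction on $i$ that
\begin{equation*}
D_s^i h=\big(D_s^i a+P_i\big)\,n+\big(D_s^i b+Q_i\big)\,v\,,
\end{equation*}
where $P_i$ and $Q_i$ are universal polynomial expressions in $a,b$, their arc-length derivatives $D_s^j a,D_s^j b$ with $j<i$, and $\kappa,D_s\kappa,\dots,D_s^{i-1}\kappa$, each term containing at least one factor of $\kappa$ or a derivative thereof. Consequently $|D_s^i h|^2=(D_s^i a+P_i)^2+(D_s^i b+Q_i)^2$, and summing over $i$ gives $G^{H^l}_c(h,h)$ as an integral of a polynomial in the quantities $D_s^j a$, $D_s^j b$ ($0\le j\le l$) with coefficients that are smooth functions of $c$ through $\kappa$ and its derivatives. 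The second step is then a routine application of the arithmetic–geometric mean inequality: each cross term and each lower-order term in the expanded integrand is bounded by a constant multiple of $\sum_{j}\big((D_s^j a)^2+(D_s^j b)^2\big)$ with coefficients that are smooth, reparametrization-invariant, everywhere-positive functions of $c$ (of the form ``$1+{}$a polynomial in $\kappa$ and its derivatives''). Collecting these bounds produces finitely many coefficients $(A_i)_c,(B_i)_c$ of the required type with $G^{H^l}_c(h,h)\le G_c(h,h)$ pointwise in $c$ and $h$.

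I expect the main obstacle to be purely bookkeeping rather than conceptual: controlling the combinatorics of the lower-order terms $P_i,Q_i$ and verifying that after the arithmetic–geometric estimates the resulting coefficients $(A_i)_c,(B_i)_c$ are still smooth in $f$ and strictly positive (one simply adds $1$ to absorb the lower bound, as in Lemma~\ref{lem:stronger_than_h1} where $1+2\kappa^2$ appears). Since only finitely many derivatives of $\kappa$ enter — at most $D_s^{l-1}\kappa$ — smoothness and positivity of the coefficients are clear, and the whole construction stays manifestly within the hypotheses of Lemma~\ref{lem:sufficient}, so the conclusion that horizontality equals normality is immediate. One could also remark, paralleling the remark after Lemma~\ref{lem:stronger_than_h1}, that the associated operator $L$ will typically fail the order conditions of the well-posedness theorem of \cite{Bauer2011b}, but that is not needed for the statement.
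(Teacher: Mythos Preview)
Your proposal is correct and follows essentially the same approach as the paper: expand $D_s^i h$ in the moving frame $\{n,v\}$, bound the resulting cross terms by the arithmetic--geometric inequality to obtain coefficients depending only on $\kappa$ and its arc-length derivatives, and then invoke Lemma~\ref{lem:sufficient}. The paper carries this out explicitly only for $l=2$ (calling the estimate ``Jensen's inequality'') and then states that higher orders are similar, whereas you phrase the general step via the inductive description of $P_i,Q_i$; the content is the same.
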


\begin{proof}
The statement for $l=0$ is trivial. For $l=1$, it has been shown in Lemma \ref{lem:stronger_than_h1}. For $l=2$, it remains to bound the highest order term. An application of Jensen's inequality yields the estimate
\begin{multline*}
|D_s^2(an+bv)|^2\leq 4(D^2_s a)^2 + 16(D_s a)^2\ka^2 +4a^2(D_s\ka)^2+4a^2\ka^4
\\
+4(D^2_s b)^2 + 16(D_s b)^2\ka^2 +4b^2(D_s\ka)^2+4b^2\ka^4\,.
\end{multline*}
Letting $h=an+bv$ and taking in account also the zero and first order terms, c.f. Lemma \ref{lem:stronger_than_h1}, one obtains
\begin{multline*}
G^{H^2}(h,h)\leq
\int_{S^1}\Big(
4(D^2_s a)^2 
+ \left(16\ka^2+2\right)(D_s a)^2 +\left(4(D_s\ka)^2+4\ka^4+2\kappa^2+1\right)a^2
\\
+4(D^2_s b)^2 
+ \left(16\ka^2+2\right) (D_s b)^2
+\left(4(D_s\ka)^2+4\ka^4+2\kappa^2+1\right)b^2\Big)ds\,.
\end{multline*}
The expression on the right-hand side defines a metric, which is of the form required in Lemma \ref{lem:sufficient}. Therefore, the horizontal bundle equals the normal bundle for this metric. By construction, it dominates the $H^2$ metric. For higher order metrics, the proof is similar.
\end{proof}

Since we have shown that it is possible to construct arbitrarily strong metrics for which horizontality equals normality, we are able to control the regularity of curves in the metric completion. 

\begin{cor}
Let $G$ be a metric which is stronger than the $H^l$ metric. Then the metric completion of the space $(\Imm(S^1,{\mathbb R}^2), G)$ is contained in the set $\on{Imm}^l(S^1,\R^2)$ of all $H^l$ immersions.
\end{cor}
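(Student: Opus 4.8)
The plan is to deduce this from the fact that the metric completion of $(\Imm(S^1,\R^2),G)$ injects continuously into the metric completion with respect to the $H^l$ metric, and then to identify the latter.

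\smallskip

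First I would recall why domination of metrics gives domination of geodesic distances: if $G\geq G^{H^l}$ pointwise on every tangent space, then every path has $G$-length at least its $G^{H^l}$-length, so $\dist^G\geq \dist^{H^l}$. Hence a $G$-Cauchy sequence of immersions is also $H^l$-Cauchy, and the identity map extends to a continuous (indeed $1$-Lipschitz) map $\overline{(\Imm(S^1,\R^2),G)}\to\overline{(\Imm(S^1,\R^2),\dist^{H^l})}$ between metric completions, which is injective because it is distance-nonincreasing in a controlled way — more precisely, one uses that a sequence converging to two different limits downstairs would already have been non-Cauchy downstairs. The only subtlety is that the $G$-completion a priori could contain points not distinguished by $\dist^{H^l}$; but since $\dist^{H^l}$ itself is a genuine metric (non-vanishing, by the results cited for standard Sobolev metrics of order $\geq 1$), the extended map is a well-defined injection, so the $G$-completion embeds as a subset of the $H^l$-completion.

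\smallskip

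Next I would invoke the description of the $H^l$-metric completion. This is exactly the input from \cite{Bruveris2014} referred to in the introduction: for $l\geq 1$, the metric completion of $\Imm(S^1,\R^2)$ with respect to the (reparametrization invariant) Sobolev $H^l$ metric consists of $H^l$-immersions, i.e. is contained in $\on{Imm}^l(S^1,\R^2)$. (For $l=0$ the statement is vacuous since $\on{Imm}^0$ is all of $L^2$, or one simply excludes that case as in Theorem \ref{strongerthensob}.) Combining this with the embedding from the previous step gives that the $G$-completion is contained in $\on{Imm}^l(S^1,\R^2)$, which is the claim.

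\smallskip

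The main obstacle, and the point that needs the most care, is the passage from ``$G\geq G^{H^l}$'' to a clean statement about completions: one must be careful that the $H^l$-metric here means the \emph{reparametrization invariant} Riemannian metric $G^{H^l}_c(h,h)=\int_{S^1}\sum_{i=0}^l|D_s^i h|^2\,ds$ (with arc-length derivatives and arc-length integration, depending on $c$), not the flat Sobolev norm on the ambient space, and that it is precisely this metric whose completion is analyzed in \cite{Bruveris2014}. Once the correct metric is matched up, the argument is essentially formal. A secondary point worth a sentence is that the statement is about $\Imm(S^1,\R^2)$ rather than the shape space $B_i(S^1,\R^2)$, so no quotient or horizontality considerations enter here — those were needed only for the geometric significance of the class of metrics, not for this completion estimate.
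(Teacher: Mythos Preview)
Your approach is essentially the same as the paper's: $G\geq G^{H^l}$ implies $\dist^G\geq\dist^{H^l}$, so every $G$-Cauchy sequence is $H^l$-Cauchy and hence converges in $\Imm^l(S^1,\R^2)$ by the completeness result for the $H^l$ metric (which in the paper is Theorem~\ref{thm:completeness}, drawn from \cite{Bruveris2014}). The paper's proof is just these two sentences.

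Two small points to fix. First, the completeness input you cite holds for $l\geq 2$, not $l\geq 1$; Theorem~\ref{thm:completeness} and \cite{Bruveris2014} both require $l\geq 2$. Second, your injectivity discussion is not correct as written: the extended map being $1$-Lipschitz does not make it injective, and the non-degeneracy of $\dist^{H^l}$ on smooth immersions does not rule out two distinct points of the $G$-completion mapping to the same $H^l$-immersion. The paper itself does not address this and simply says the completion ``can be seen as a subset'' of $\Imm^l$, i.e.\ it treats the statement at the level of ``every $G$-Cauchy sequence has an $H^l$-immersion as limit''. If you want to match the paper, drop the injectivity paragraph; if you want to strengthen the statement to a genuine embedding, you would need an additional argument (and it is not clear one is available without further hypotheses on $G$).
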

\begin{proof}
Every Cauchy sequence with respect to the metric $G$ is also a Cauchy sequence with respect to the weaker metric $H^l$. By Theorem \ref{thm:completeness}, the sequence has a limit in the space $\Imm^l(S^1,\mathbb R^2)$. Therefore, the completion of $(\Imm(S^1,{\mathbb R}^2), G)$ can be seen as a subset of $\Imm^l(S^1,\mathbb R^2)$. 
\end{proof}
See also \cite{Mennucci2008} for the metric completion of a length-weighted Sobolev $H^2$-metric.

\section{Numerical results}

In this section, we demonstrate that the boundary value problem for geodesics on shape space can be solved efficiently for metrics satisfying the assumptions of Theorem \ref{thm:decomposition}. In particular, we are able to treat higher order metrics without much effort.

\subsection{The boundary value problem for geodesics}

We present several examples of (discrete) solutions of the boundary value problem for geodesics of planar curves, i.e., we are searching for a geodesic $c(t,\cdot)$ connecting two given boundary curves $c_0,c_1\in B_i(S^1,\R^2)$. We tackle this problem by minimizing the horizontal path energy
\begin{equation}\label{equ:horizontal_engergy}
    E^{\on{hor}}(c) = \int_0^1 G(c_t^{\on{hor}},c_t^{\on{hor}}) dt
\end{equation}
over the set of paths in $\Imm(S^1,\R^2)$ with fixed endpoints. Here, $c_t$ denotes the time derivative $\partial_t c$ of the path. The horizontal component $c_t^{\on{hor}}$ is easy to calculate whenever the metric satisfies the conditions of Theorem \ref{thm:decomposition}.

Note that the energy functional \eqref{equ:horizontal_engergy} does not depend on the parametrization of the curve at each instant of time. So we are free to choose a suitable parametrization. We do this by adding to the energy functional a term penalizing  irregular parametrizations.  So instead of minimizing the horizontal path energy, we minimize the sum of horizontal path energy and a penalty term, which measures the deviation from constant speed parametrization.

\subsection{The metrics of interest}\label{sec:metrics_of_interest}
In our numerical experiments, we consider four different metrics of order zero, one and two, to be described below. All of them satisfy the conditions of Theorem \ref{thm:decomposition}. For horizontal tangent vectors $h=an$ they are given by:
\begin{itemize}
 \item Metric 1 ($G^A$-metric with $A=2$):
 \begin{equation*}
  G_c^1(an,an)=\int_{S^1} (1+2\kappa^2)a^2  ds
 \end{equation*}
 \item Metric 2 ($H^1$-type metric):
 \begin{equation*}
 G_c^2(an,an)=\int_{S^1} \left((1+2 \kappa^2)(a^2)
+2(D_s a)^2 \right)ds
 \end{equation*}
 \item Metric 3: 
 \begin{equation*}
   G_c^2(an,an)=
 \int_{S^1}\left(4(D_s\ka)^2+4\ka^4+2\kappa^2+1\right)a^2 ds
 \end{equation*}
 \item Metric 4 ($H^2$-type metric):
  \begin{align*}
   G_c^4(an,an)=&
 \int_{S^1}\Big(
4(D^2_s a)^2 
+ \left(16\ka^2+2\right)(D_s a)^2 \\&\qquad+\left(4(D_s\ka)^2+4\ka^4+2\kappa^2+1\right)a^2\Big) ds
 \end{align*}
\end{itemize}
The motivation for Metrics 2 and 4 lies in Theorem \ref{strongerthensob}: Metric 2 dominates the $H^1$-metric and Metric 4 dominates the $H^2$-metric. 

Metric 1 (the $G^A$-metric) has been introduced by Michor and Mumford \cite{Michor2006c} to overcome the degeneracy of the $L^2$-metric. This metric involves all terms of the $H^1$-metric which 
do not differentiate the tangent vector, but only the foot point, see \cite[Sect. 3.2]{Michor2006c}. Metric 3 has a similar property: it consists of all terms of the $H^2$-metric which do not differentiate the tangent vector.

\subsection{Discretization of the energy functional}\label{sec:discretization}

To obtain approximate solutions of the infinite dimensional problem, we discretize the space of curves and the energy functional. We follow the same maxims as \cite{Michor2006c,Michor2007,Bauer2012a,Bauer2012b}. 

Curves are replaced by polygons with a fixed number of vertices. According to the principles of discrete differential geometry, geometric quantities are defined on their natural domain (edges, vertices, triangles, etc.) and are assigned weights corresponding to their domain. When necessary, domains are split into sub-domains and the weights are adjusted accordingly. This ensures that the integrals in the definition of the horizontal energy functional are evaluated properly. 

For example, the discrete curvature $\kappa$ is defined via the turning angle between two adjacent edges. Its domain consists of the two adjacent half-edges and the corresponding weight is the total length of the half-edges. As a further example, the normal vector is defined on the edges of the curve, whereas $c_t$ is defined on vertices. Thus, to define the function $a=\langle c_t,n\rangle$, the domains have to be split: the function $a$ is defined on triangles spanned by a vector $c_t$ (obtained by either forward or backward differences) and a neighboring edge of the curve (again obtained by either forward or backward differences). Note that around each vertex, there are four such triangles. To calculate spatial derivatives of $a$, one needs to define the ``distance'' between adjacent triangles. If the triangles share a common edge, then the distance is defined as half of the edge length. If they share a common vertex, it is defined as half of the weight assigned to the vertex. Now one can define discrete 
versions of $D_s a$ and $D_s^2 a$ by taking 
first and second order finite differences, respectively.

To avoid artifacts during the optimization procedure, the energy functional is written as sums of squares instead of squares of sums, whenever possible. This can be achieved by splitting domains into sub-domains as explained above. 

The details of our implementation can be seen from the AMPL source code listed in Appendix B.

\subsection{Numerical implementation}

As in \cite{Bauer2011a}, we use the nonlinear solver IPOPT (Interior Point OPTimizer \cite{Waechter2002}) in combination with AMPL (A Modeling Language for Mathematical Programming \cite{Fourer2002}). IPOPT applies a filter based line search method to compute the minimum of the discretized horizontal energy functional. In this process, it needs the gradient and the Hessian of the energy, which is automatically and symbolically calculated by AMPL. The full AMPL-code needed to reproduce our results is printed in Appendix \ref{ampl}.

\subsection{Description of the numerical experiments}

Numerics for first order metrics on the shape space of planar curves (and, in particular, the elastic metric) are well developed \cite{Younes1998,Klassen2004,Jermyn2011}. A recurrent issue with these metrics is that singularities can form along geodesics. This leads to severe problems, including a-symmetry of the numerically calculated geodesic distance function \cite{Bauer2014b}. It is a natural question if these problems can be alleviated by using higher order metrics. This is indeed the case in our numerical experiments, where we compare a first order metric (Metric 2) to a second order one (Metric 4). An example is presented in Figure \ref{fig:cat2cow} showing a geodesic connecting the silhouettes of a cat and a cow.\footnote{The shapes are taken from the database of closed binary shapes collected by the LEMS Vision Group at Brown university (\url{http://www.lems.brown.edu/~dmc}).} For the first order metric, one can clearly observe the appearance of singularities (kinks) along the geodesic path. Such 
singularities did not occur for the higher order metric (Metric 4). In the case of simpler deformations, both metrics perform well (cf. Figure \ref{fig:circle2star}).

\begin{figure*}
\begin{center}
\includegraphics[width=.48\textwidth]{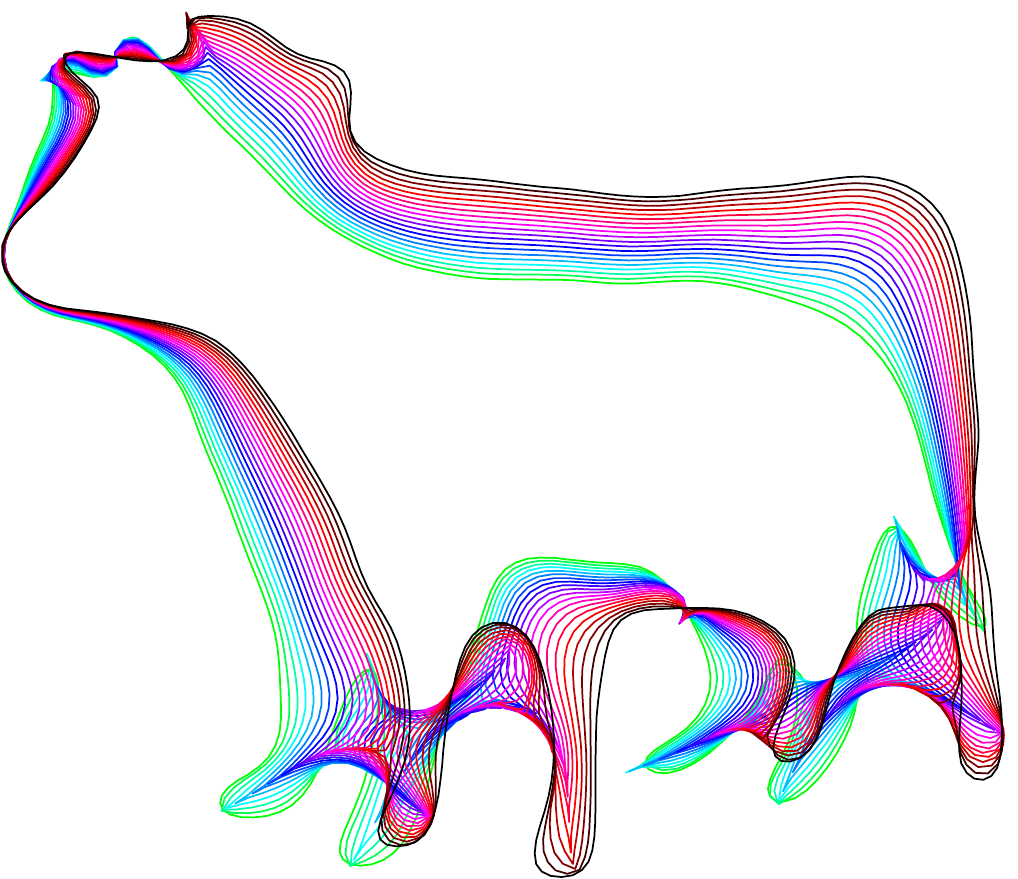} 
\includegraphics[width=.48\textwidth]{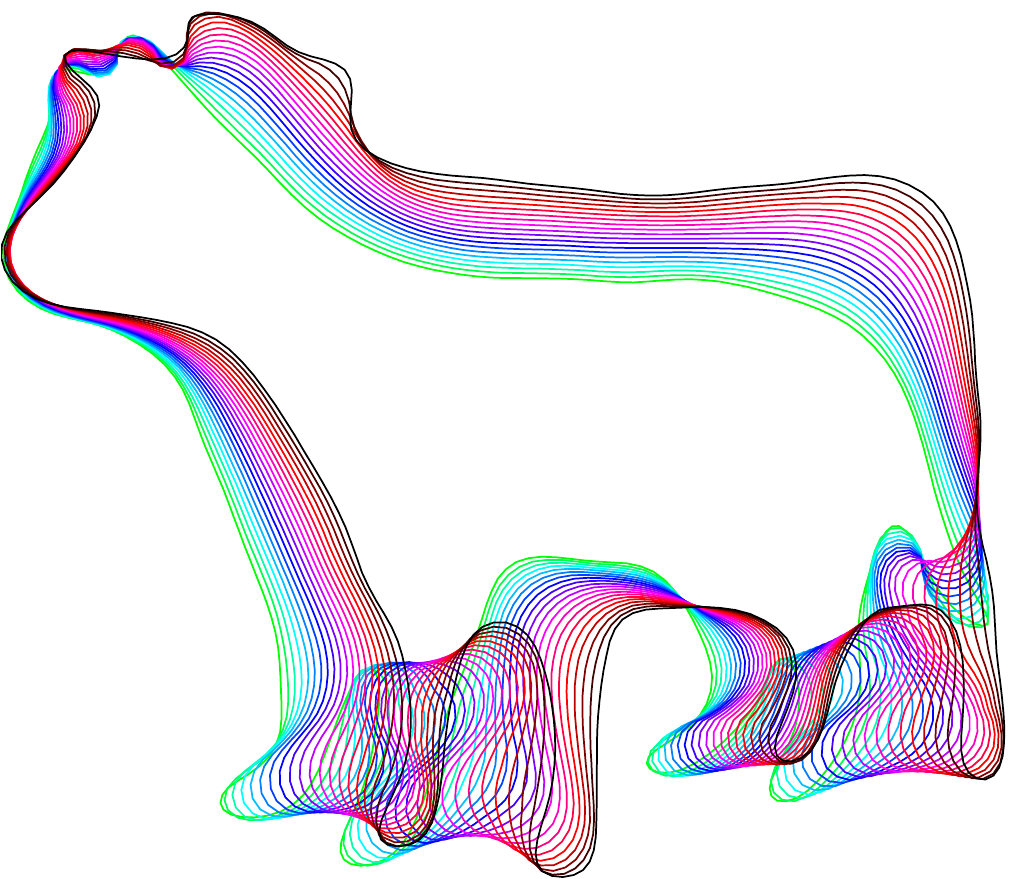}
\includegraphics[width=.96\textwidth]{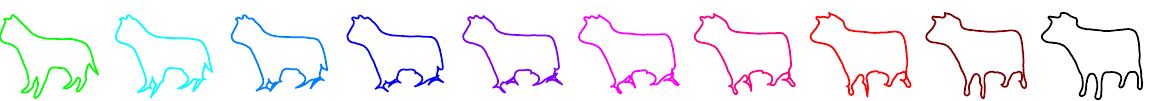}
\includegraphics[width=.96\textwidth]{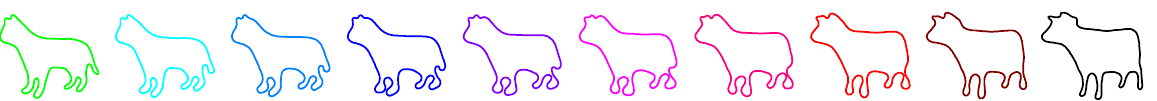}
\end{center}
\caption{Geodesic between a cat- and a cow-shaped curve. Singularities appear under Metric 2 (left and top), but not under Metric 4 (right and bottom).}
\label{fig:cat2cow}
\end{figure*}

\begin{figure*}
\begin{center} 
\includegraphics[width=.99\textwidth]{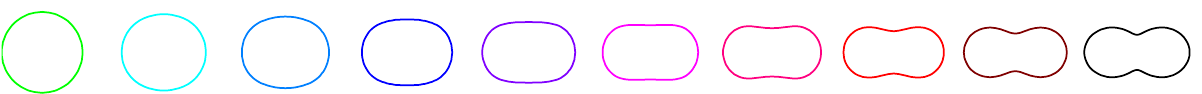} 
\includegraphics[width=.99\textwidth]{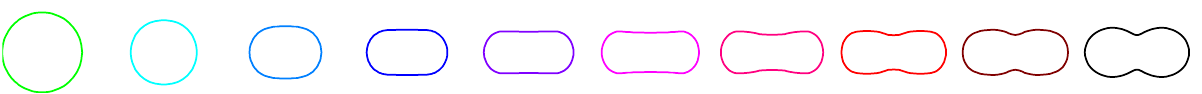} 
\includegraphics[width=.99\textwidth]{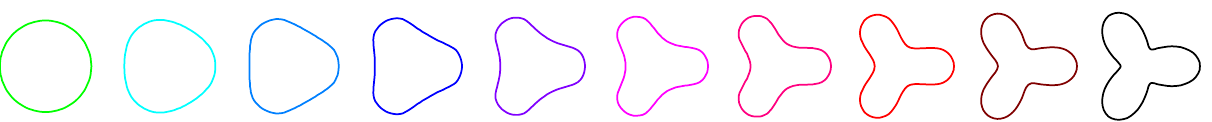} 
\includegraphics[width=.99\textwidth]{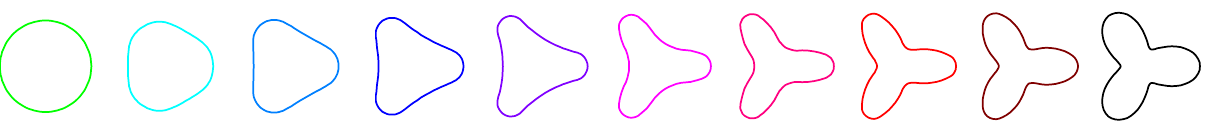} 
\includegraphics[width=.99\textwidth]{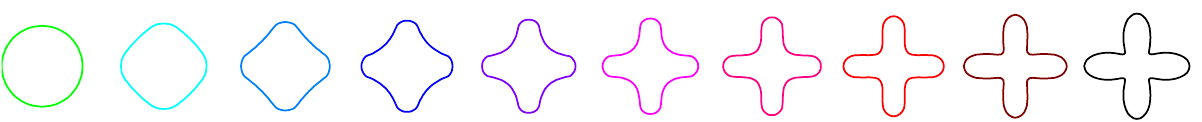} 
\includegraphics[width=.99\textwidth]{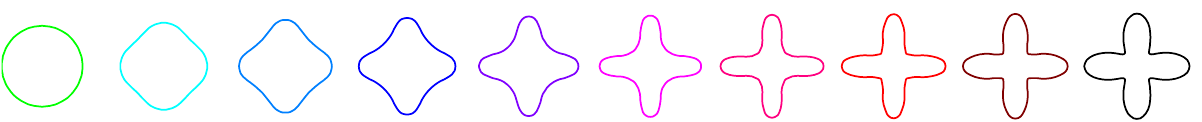} 
\includegraphics[width=.99\textwidth]{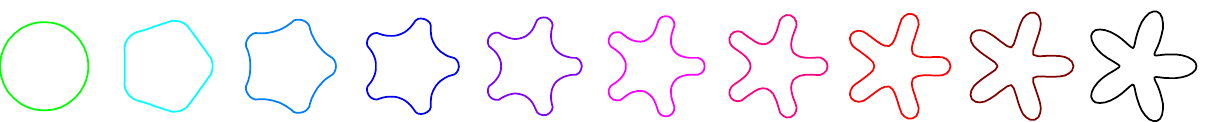} 
\includegraphics[width=.99\textwidth]{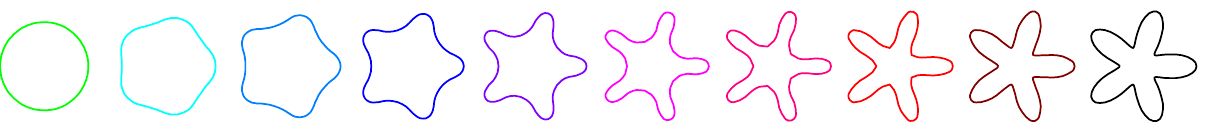} 
\end{center}
\caption{Geodesics between various simple shapes under Metric 2 (lines 1, 3, 5, and 7) and Metric 4 (lines 2, 4, 6, and 8). Both metrics perform well.}
\label{fig:circle2star}
\end{figure*}

\begin{figure*}
\begin{center}
\includegraphics[width=.49\textwidth]{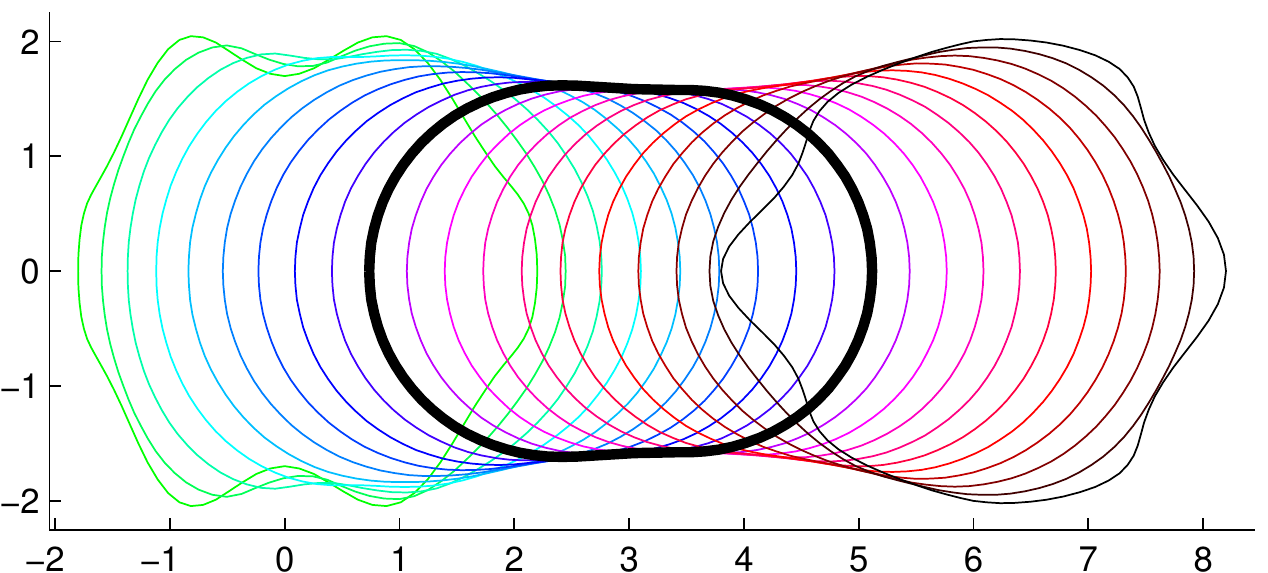} 
\includegraphics[width=.49\textwidth]{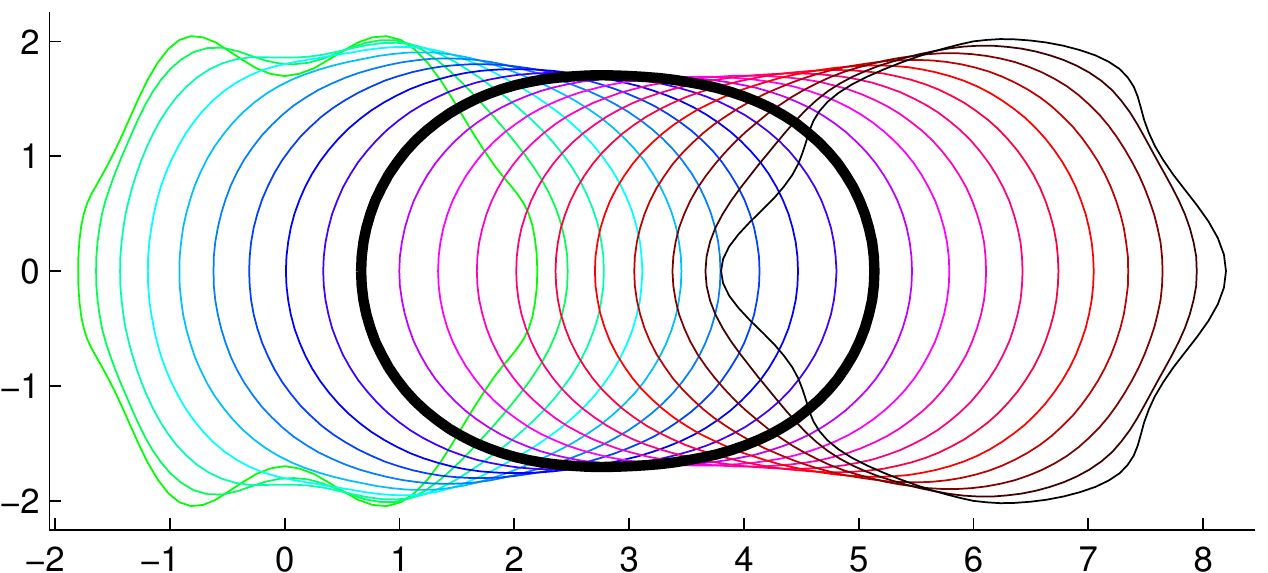} 
\includegraphics[width=.49\textwidth]{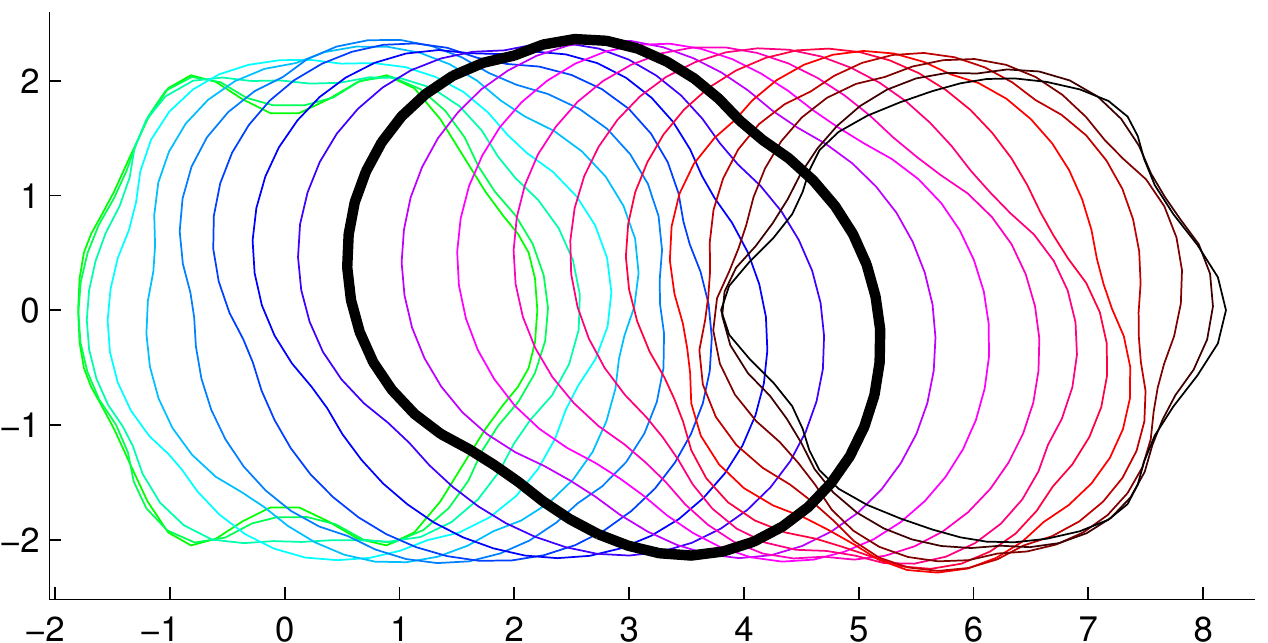} 
\includegraphics[width=.49\textwidth]{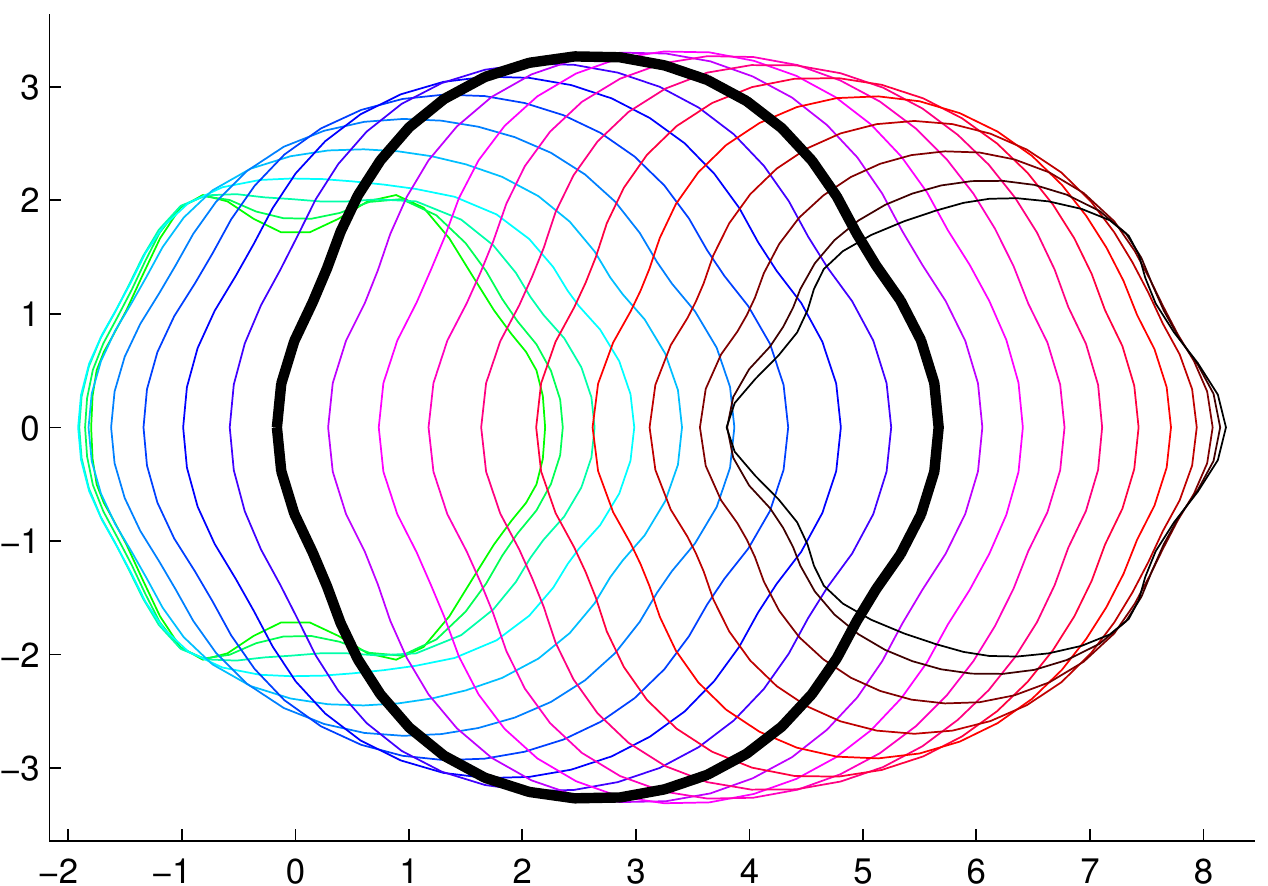} 
\end{center}
\caption{Geodesics between two distinct curves under Metrics 1--4 (top left to bottom right). High frequency components of the shapes quickly disappear under Metrics 1 and 2.}
\label{fig:move}
\end{figure*}

High-frequency variations of shapes are felt more strongly by higher order metrics. Thus, one might guess that these metrics do a better job at preserving the fine structure of a shape along a deformation affecting predominantly its coarse structure. Such an effect can be observed in Figure \ref{fig:move} showing geodesics between two distinct shapes under all four metrics.

Another observation is that geodesics between shapes lying far apart in the plane tend to pass through one and the same shape, which (depending on the metric) might resemble an ellipse or a cigar. This phenomenon, which might be connected to regions of negative curvature in shape space, has first been observed for the $G^A$-metric in \cite{Michor2006c} and for almost local metrics in \cite{Bauer2012a}. Here, we show that the same effect also occurs for second order metrics, albeit only at higher distances of the shapes (cf. Figure \ref{fig:move_long}). 

\begin{figure*}
\begin{center}
\includegraphics[width=.98\textwidth]{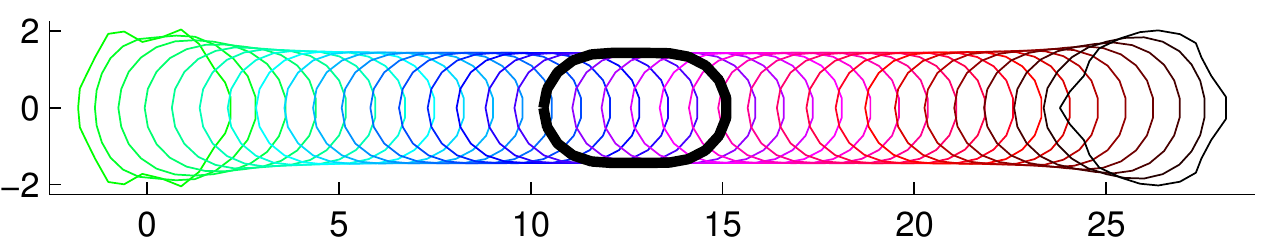} 
\includegraphics[width=.98\textwidth]{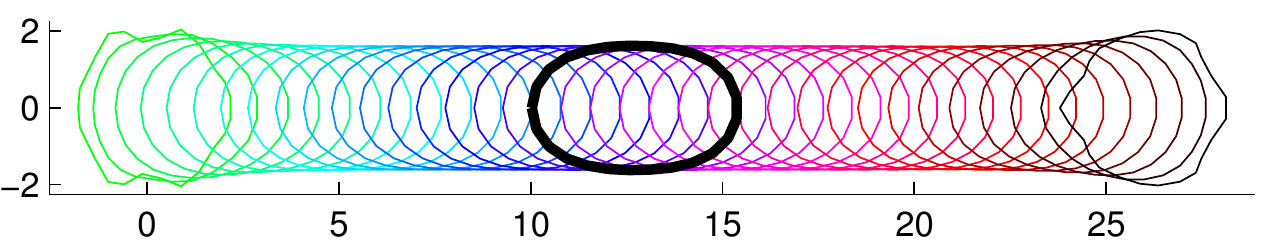} 
\includegraphics[width=.98\textwidth]{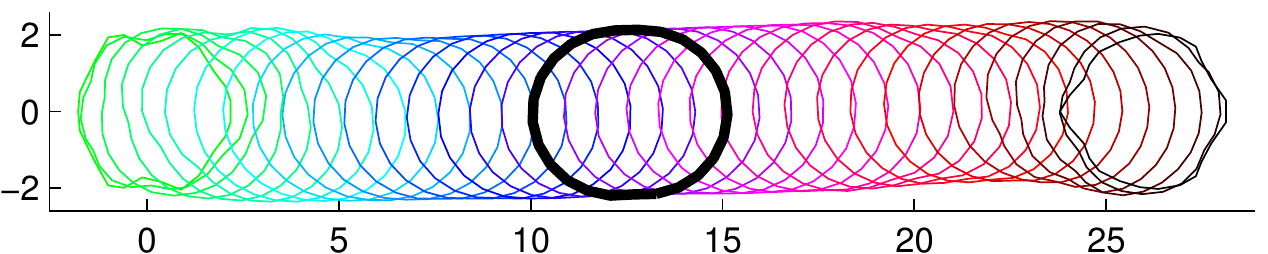} 
\includegraphics[width=.98\textwidth]{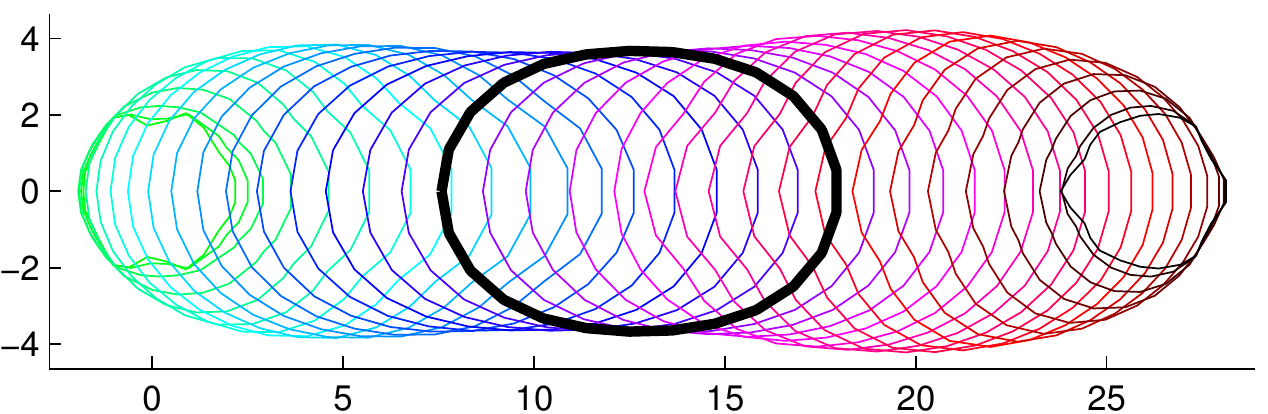} 
\end{center}
\caption{Geodesics between sufficiently distant shapes tend to go through an ellipse- or cigar-like shape (Metrics 1--4).}
\label{fig:move_long}
\end{figure*}

\section{Conclusion}

We characterized all metrics on spaces of immersions such that the horizontal/vertical splitting coincides with the normal/tangential one (Theorem \ref{thm:decomposition}). The first splitting can be easily calculated numerically, while the second splitting is important both theoretically and in applications because it mirrors the geometry of shape space and geodesics thereon. 

To show that the condition that horizontality equals normality is not too stringent, we studied the special case of planar curves in more detail. We proved in this case that, for any Sobolev metric of given order, there is a stronger metric with the property that horizontality equals normality. By a result of \cite{Bruveris2014}, this implies that the regularity of curves in the metric completion can be controlled. Therefore, our class of metrics is rich enough for applications (e.g., stochastics on shape space) where working with the metric completion is indispensable.

Finally, we demonstrated in numerical examples that the boundary value problem for geodesics on shape space can be solved efficiently for these metrics.

\appendix

\section{Completeness results for Sobolev metrics}

On the manifold of planar curves, one can define a reparametrization invariant Sobolev metric  by
\begin{equation}\label{Sobolev_metric}
    G^{H^l}_c(h,h)
    = \int_{S^1} \left\langle \sum_{i=0}^l(-1)^i D^{2i}_s h,h\right\rangle ds
    =\int_{S^1}\sum_{i=0}^l |D^i_s h|^2 ds\,.
\end{equation}
Recently, it has been shown in \cite{Bruveris2014} that the $H^l$ metric \eqref{Sobolev_metric} is geodesically complete for $l\geq 2$. Since the Hopf-Rinow theorem does not hold in  infinite dimensions \cite{Atkin1975}, this does not automatically imply metric completeness. Instead, we have the following result:

\begin{thm}\label{thm:completeness}
For $l\geq 2$, the metric completion of the space of smooth immersed curves endowed with the $H^l$ metric \eqref{Sobolev_metric} is the space of all $H^l$ immersions
\begin{equation*}
\on{Imm}^l(S^1,\R^2):=\{c\in H^l(S^1,\R^2): |c'|\neq 0\}\,.
\end{equation*}
\end{thm}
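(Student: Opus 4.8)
The plan is to identify the metric completion by comparing the geodesic distance $\dist$ of $G^{H^l}$ directly with the distance coming from the flat Hilbert norm $\|\cdot\|_{H^l}$ on $H^l(S^1,\R^2)$. Two ingredients will do the job. First, a \emph{local equivalence of metrics}: on any set of immersions $c$ with $\|c\|_{H^l}\le C$ and $\min_\th|\p_\th c|\ge 1/C$, the quadratic form $G^{H^l}_c$ is Lipschitz-equivalent, with constants depending only on $C$, to the constant-coefficient form $h\mapsto\sum_{i=0}^l\int_{S^1}|\p_\th^i h|^2\,d\th$. Second, an \emph{a priori bound}: sets of bounded $\dist$-diameter are bounded in $\|\cdot\|_{H^l}$ and bounded away from the non-immersions. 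Granting these, the argument runs as follows. If $(c_n)$ is $\dist$-Cauchy, it lies in one fixed controlled set by the a priori bound; on that set, combining the a priori bound with the local equivalence shows that $\dist$ dominates a constant multiple of the $H^l$-distance, so $(c_n)$ is $H^l$-Cauchy and converges in $H^l$ to some $c$; since $H^l(S^1)\hookrightarrow C^1(S^1)$ for $l\ge 2$, the uniform lower bound for $|\p_\th c_n|$ survives in the limit and $c\in\Imm^l(S^1,\R^2)$. Conversely, any $c\in\Imm^l(S^1,\R^2)$ is an $H^l$-limit of smooth immersions (mollify, using that $\Imm^l$ is open in $H^l$), the straight-line paths joining them stay in a small $H^l$-ball about $c$ and hence in a controlled set, and by the local equivalence these paths have short $G^{H^l}$-length; thus the approximating sequence is $\dist$-Cauchy and $c$ belongs to the completion.

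For the local equivalence, write $D_s=|\p_\th c|^{-1}\p_\th$. Then each $D_s^i h$ with $i\le l$ is, by the Leibniz and Fa\`a di Bruno formulas, a universal polynomial in $\p_\th h,\dots,\p_\th^i h$ and in $|\p_\th c|^{-1}$ together with its $\p_\th$-derivatives, and in each term a factor obtained by differentiating $|\p_\th c|$ many times is paired with a factor $\p_\th^j h$ of correspondingly low order. The hypothesis $l\ge 2$ enters because $H^1(S^1)$ is a Banach algebra, so $\|c\|_{H^l}\le C$ and $\min|\p_\th c|\ge 1/C$ bound $|\p_\th c|^{-1}$ in $H^{l-1}$ and therefore all the coefficient functions in $D_s^i h$, and likewise the density in $\int(\cdot)\,|\p_\th c|\,d\th$; estimating in both directions yields the two-sided bound. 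A first consequence, taking $C$ large, is that $\dist$ restricted to a controlled set is comparable to the $H^l$-distance; in particular $\dist$ is continuous for the $H^l$-topology and extends continuously to $\Imm^l(S^1,\R^2)$ and to its completion, which is what makes the density step above meaningful.

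The substantive work is the a priori bound, a Gr\"onwall-type estimate. Let $c(t,\cdot)$, $t\in[0,1]$, be a path of immersions of $G^{H^l}$-length at most $R$ with $c(0,\cdot)=c_0$. One monitors the two quantities $\|c(t)\|_{H^l}$ and the reciprocal of $\min_\th|\p_\th c(t,\th)|$: their $t$-derivatives are controlled by $\|c_t(t)\|_{H^l}$ and by $\|c_t(t)\|_{C^1}$ times that reciprocal, respectively, and — while the bounds currently in force have not yet, say, doubled — the local equivalence gives $\|c_t(t)\|_{H^l}\le\varphi(\text{current bound})\cdot\bigl(G^{H^l}_{c(t)}(c_t,c_t)\bigr)^{1/2}$ for some increasing $\varphi$. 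A bootstrap in $t$ then propagates the bounds across $[0,1]$ in terms of $R$ and $c_0$ alone; running this along length-minimizing paths from the members of a $\dist$-Cauchy sequence to its first term produces the a priori bound. I expect this bootstrap to be the only genuinely delicate point: the top-order term $D_s^l h$ forces one to differentiate $|\p_\th c|\in H^{l-1}$ exactly $l-1$ times, so that $\p_\th^{l-1}|\p_\th c|$ lies only in $L^2$, and the estimates just barely close at $l=2$, where the algebra property of $H^1(S^1)$ is used; keeping track of which low-order factors this worst term multiplies is where the care goes. The rest is routine bookkeeping.

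Finally, that the completion is \emph{all} of $\Imm^l(S^1,\R^2)$, and not a proper subset, follows once one checks that $(\Imm^l(S^1,\R^2),\bar\dist)$ is itself complete: by the above, a sequence is $\dist$-Cauchy exactly when it is $H^l$-Cauchy with uniformly non-degenerate derivative, and such a sequence converges in $H^l$ to an $H^l$-immersion. Alternatively, this closing step may be read off from the geodesic completeness of $G^{H^l}$ for $l\ge 2$ established in \cite{Bruveris2014}, combined with the fact, proved above, that $\dist$-bounded sets have $H^l$-bounded closures contained in $\Imm^l(S^1,\R^2)$.
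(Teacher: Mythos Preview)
Your proposal is correct and follows the same strategy as the paper: compare the reparametrization-invariant metric $G^{H^l}$ with the flat $H^l(d\theta)$ metric via a local equivalence on sets where $\|c\|_{H^l}$ and $\min|\p_\th c|^{-1}$ are controlled, together with an a priori bound showing that $\dist$-bounded sets remain controlled. The only difference is that the paper imports both ingredients wholesale from \cite{Bruveris2014} (Lemmas~4.10 and~5.1 there), whereas you sketch their proofs---including the Gr\"onwall bootstrap for the a priori bound---so your argument is more self-contained but not different in substance.
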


\begin{remark}
Note that $G^{H^l}$ is a strong Riemannian metric on the Hilbert manifold $\on{Imm}^l(S^1,\R^2)$.
Thus, the topology on $\on{Imm}^l(S^1,\R^2)$ induced by the geodesic distance of the metric $G^{H^l}$ is equal to the standard topology on $H^l(S^1,\R^2)$.
\end{remark}

\begin{proof}
The proof of this theorem is based on a comparison with the flat (non-reparametrization invariant) metric given by
\begin{equation*}
    G^{H^l(d\theta)}_c(h,h)= \int_{S^1}\sum_{i=0}^l |\partial^i_{\theta} h|^2 d\theta\,.
\end{equation*}
Note that in contrast to the reparametrization invariant metric, this metric does not depend on the foot point $c$. To highlight the difference between these metrics, we use the notations $H^l(ds)$ and $H^l(d\theta)$. 

According to  \cite[Lemma 5.1 and Lemma 4.10]{Bruveris2014}, the following holds for each $r>0$ and $c_0 \in \Imm(S^1,\R^2)$.
\begin{itemize}
\item[(a)] On the $H^l(ds)$-metric ball $B^{ds}_r(c_0)$, the $H^l(ds)$-metric is equivalent to the flat $H^l(d\th)$-metric, with a constant that depends only on the center $c_0$ and the radius $r$.
\item[(b)] There exists a $C=C(r,c_0)> 0$ such that $|c'|\geq C$ for all $c\in B^{ds}_r(c_0)\,.$
\end{itemize}

Let $c_k$ be a Cauchy-sequence in $\on{Imm}(S^1,\R^2)$ with respect to the $H^l(ds)$-geodesic distance. Then, for each $r >0$ there exists $N>0$ such that for each $m\geq N$ the curve $c_m$ lies in the $H^l(ds)$-metric ball $B_r(c_N)$.  By (a), the sequence $c_k$ is also a Cauchy sequence with respect to the flat metric $H^l(d\theta)$. Therefore, the sequence has a limit $c$ in the Hilbert space $H^l(S^1,\R^2)$. By (b), we conclude that $c$ is still an immersion. Thus, we have shown that the metric completion of $(\Imm(S^1,\R^2),H^l(ds))$ is contained in $(\Imm^l(S^1,\R^2),H^l(ds))$. In fact, we have equality because any curve in $\Imm^l(S^1,\R^2)$ can be approximated in the $H^l(ds)$-metric by a sequence of curves in $\Imm(S^1,\R^2)$. 
\end{proof}

\section{AMPL}\label{ampl}

In this section, we show the AMPL model file for the minimization of energy functionals of the form
\begin{multline*}
\int_0^1 \int_{S^1} \Big[(A_0+A_1 \kappa^2+A_2 \kappa^4 
+ A_3(\partial_s\kappa)^2) a^2 
\\
+(B_0+B_1\kappa^2) (\partial_s a)^2 
+ C_0 (\partial_s^2 a)^2\Big] ds dt.
\end{multline*}
This encompasses all four metrics defined in Section \ref{sec:metrics_of_interest}. The discretization is based on the principles laid out in Section \ref{sec:discretization}.

We describe the notation used in the code. The parameters $T$ and $N$ denote the number of time steps and the number of vertices, respectively. $V=\{1,\ldots,N\}$ is the set of all vertices. Time indices are denoted by $s,t$ and spatial indices by $v,w$. When both $s$ and $t$ are used simultaneously, then $s$ determines the surface whose geometry is used in the evaluation of geometric quantities like $\kappa, n$, etc., whereas $t$ determines the time increment which is used in the evaluation of $c_t$. In a similar spirit, when both indices $v$ and $w$ are used simultaneously, then $v$ determines the base vertex of a triangle and $w$ the neighboring edge. 

\lstset{basicstyle=\small\ttfamily,
  emph={param,sum,var,sum,if,else,minimize,then,sqrt,abs,acos,set},
  emphstyle=\textbf, extendedchars=true,
  backgroundcolor=\color[gray]{0.9},
  numbers=left, numberstyle=\scriptsize, stepnumber=2, numbersep=5pt,
  xleftmargin=12pt, xrightmargin=12pt,columns=flexible,
  showstringspaces=false,morecomment=[l]{\#}}

\begin{lstlisting}[caption=AMPL model file]
param T > 1 integer;
param N integer;
param Pi default 3.141592653589793;
param A0 default 1; param A1 default 2; 
param A2 default 4; param A3 default 4;
param B0 default 2; param B1 default 16;
param C0 default 4;
set V := 1..N circular;
param c0 {V,1..2}; param c1 {V,1..2};
# free vertices, i.e., those at t=2,...,T
var cmiddle {t in 2..T,v in V,i in 1..2} := 
  ((T-t+1)*c0[v,i]+(t-1)*c1[v,i])/T;
# all vertices
var c {t in 1..T+1,v in V,i in 1..2} = 
  (if t=1 then c0[v,i] else if t=T+1 then c1[v,i] 
  else cmiddle[t,v,i]);
# edges
var c_x {t in 1..T+1,v in V,i in 1..2} =
  c[t,next(v,V),i]-c[t,v,i];
# volume form defined on edges
var vol_edge {t in 1..T+1,v in V} =
  sqrt(c_x[t,v,1]^2+c_x[t,v,2]^2);
# volume form defined on vertices
var vol_vert {t in 1..T+1,v in V} =
  (vol_edge[t,prev(v,V)]+vol_edge[t,v])/2;
# volume form defined on triangles
var vol_tri {t in 1..T+1,v in V,w in {prev(v,V),v}} =
  vol_vert[t,v]+vol_edge[t,w];
# tangent vector defined on edges
var tangent {t in 1..T+1,v in V,i in 1..2} =
  c_x[t,v,i]/vol_edge[t,v];
# normal vector defined on edges
var n {t in 1..T+1,v in V,i in 1..2} = (
  if i=1 then 
    tangent[t,v,2]
  else 
    -tangent[t,v,1]);
# total length of the curve
var length {t in 1..T+1} = (
  sum {v in V} vol_edge[t,v]);
# turning angle defined on vertices
var angle {t in 1..T+1,v in V} =
  acos(tangent[t,prev(v,V),1]*tangent[t,v,1]
    +  tangent[t,prev(v,V),2]*tangent[t,v,2]);
# curvature defined on vertices
var kappa {t in 1..T+1,v in V} =
  angle[t,v]/vol_vert[t,v];
# spatial derivative of curvature defined on edges
var kappa_s {t in 1..T+1,v in V} =
  (kappa[t,next(v,V)]-kappa[t,v])/vol_edge[t,v];
# velocity defined on vertices
var c_t {t in 1..T,v in V,i in 1..2} =
    T*(c[t+1,v,i]-c[t,v,i]);
# function a defined on triangles
var a {t in 1..T,s in t..t+1,v in V,w in {prev(v,V),v}} =
  sum {i in 1..2} c_t[t,v,i]*n[s,w,i];
# spatial derivative of a defined on triangles
var a_s {t in 1..T,s in t..t+1,v in V,w in {prev(v,V),v}}=(
  if w=v then
    sum {i in 1..2} c_t[t,v,i]*(n[s,w,i]-n[s,prev(w,V),i]) 
    +
    sum {i in 1..2} (c_t[t,next(v,V),i]-c_t[t,v,i])*n[s,w,i] 
  else # in this case, w=v-1
    sum {i in 1..2} c_t[t,v,i]*(n[s,next(w,V),i]-n[s,w,i]) 
    +
    sum {i in 1..2} (c_t[t,v,i]-c_t[t,prev(v,V),i])*n[s,w,i] 
  ) / vol_tri[s,v,w];
# second spatial derivative of a defined on triangles. 
var a_ss {t in 1..T, s in t..t+1, v in V, w in {prev(v,V),v} } =
  if w=v then
    (
      (a[t,s,next(v,V),w]-a[t,s,v,w])/vol_edge[s,w]
      -
      (a[t,s,v,w]-a[t,s,v,prev(w,V)])/vol_vert[s,v]
    )/(vol_edge[s,w]+vol_vert[s,v])
  else # in this case, w=v-1
    (
      (a[t,s,v,next(w,V)]-a[t,s,v,w])/vol_vert[s,v]
      -
      (a[t,s,v,w]-a[t,s,prev(v,V),w])/vol_edge[s,w]
    ) / vol_tri[s,v,w];
# penalty term defined on vertices
var penalty =
  sum {t in 1..T+1,v in V} (vol_edge[t,v]-length[t]/N)^2;
# energy at each time step
var energy {t in 1..T} = 
  (
    sum {s in t..t+1, v in V, w in {prev(v,V),v} } 
    (
      (A0+A1*kappa[s,v]^2+A2*kappa[s,v]^4+A3*kappa_s[s,w]^2) 
      * a[t,s,v,w]^2 +
      (B0+B1*kappa[s,v]^2) * a_s[t,s,v,w]^2
      +
      C0 * a_ss[t,s,v,w]^2
    ) * vol_tri[s,v,w]
  )/8;
# total energy, summed up over all time steps
var total_energy = 
  (sum {t in 1..T} energy[t])/T;
# objective functional
minimize f: 
  total_energy+penalty;

\end{lstlisting}

\def\cprime{$'$}

\end{document}